\newtheorem{theorem}{Theorem}[section]
\newtheorem{definition}[theorem]{Definition}	
\newtheorem{proposition}[theorem]{Proposition}
\newtheorem{corollary}[theorem]{Corollary}	
\newtheorem{problem}[theorem]{Problem}
\newtheorem{assumption}[theorem]{Assumption}
\newcommand{\Rd}{\mathbb{R}^d}
\newcommand{\R}{\mathbb{R}}
\title{Flow Matching for Measure Transport and Feedback Control of Control-Affine Systems}
\author{%
  Karthik Elamvazhuthi\thanks{The author is affiliated with Los Alamos National Laboratory. Email-id: \texttt{karthikevaz@lanl.gov}}
}
\begin{document}

\maketitle

\begin{abstract}

We develop a \emph{flow-matching framework} for transporting probability measures under control-affine dynamics and for steering systems to points or target sets. Starting from the continuity equation associated with the control affine system
\[
\dot x = f_0(x) + \sum_{i=1}^m u_i f_i(x),
\]
we construct measure interpolations through \emph{exact} and \emph{approximate flow matching}, and extend the approach to \emph{output flow matching} when only output distributions must align. These constructions allow to directly import standard control tools, such as feedback design, oscillatory inputs, and trajectory steering, and yield sample-efficient, regression-based feedback controllers for measure-to-measure transport. 

We also introduce a complementary \emph{``noising + time-reversal'' perspective} for classical state or set stabilization, inspired by denoising diffusion models. Here stabilization is interpreted as a \emph{denoising problem}: noising corresponds to destabilizing the system through excitations, while denoising corresponds to stabilization via time reversal. We propose two methods for constructing the noising process:  

(i) \emph{Randomized-control noising}, which employs regular (non-white noise) controls through the endpoint map and naturally accommodates control constraints.  

(iI) \emph{PMP-based noising}, which leverages the Hamiltonian system from Pontryagin's Maximum Principle, corresponding the fixed or variable end-point optimal control problems, to explore the configuration space by randomizing the adjoint vectors and recovers the optimal controller for linear systems with convex costs, while providing feasible feedback laws in the nonlinear case.  

We establish existence of solutions to the corresponding ODEs and regularity of the induced flows on measures, even when control laws are nonsmooth.

Finally, we numerically illustrate the framework on linear and nonlinear systems, demonstrating its effectiveness for measure transport, steering systems to target sets and path planning in a domain with obstacles.

\end{abstract}

\section{Introduction}

Sampling from an unknown distribution is a fundamental problem in machine learning and statistics. Towards this end, a number of methods have been introduced in the literature. Examples include normalizing flows,  optimal transport \cite{onken2021ot}, time reversal of diffusion processes \cite{song2020score}, and flow matching\cite{lipmanflow,liu2022flow,albergo2023stochastic} Among these, diffusion models and flow matching have become particularly attractive: they bypass constrained optimization problems inherent in optimal transport formulations, and instead reduce the problem to solving a regression task, which leads to scalable algorithms for fast sampling.

Motivated by these developments in generative modeling, the present paper investigates the potential of flow matching for problems arising in control theory. Specifically, we consider control-affine systems of the form
\begin{equation}
\dot{x}(t) = f(x,u) = f_0(x) + \sum_{i=1}^m u_i(t) f_i(x),
\label{eq:ctrsys}
\end{equation}
where $f_i:\mathbb{R}^d \to \mathbb{R}^d$ are smooth vector fields for $i=0,\dots,m$, and the case $m<d$ is of particular interest. Our objective is to shape the probability distribution of the state process $x(t)$. If $\mu_t$ denotes the distribution of $x(t)$, its evolution is governed by the continuity equation
\begin{equation}
\partial_t \mu + \nabla \cdot \left( f(x,u(t,x)) \mu \right) = 0,
\quad (x,t)\in \mathbb{R}^d \times I,
\label{eq:ctnteq}
\end{equation}
where $I=[0,T]$.

This motivates the following problems.

\begin{problem}[Measure-to-measure control]
\label{prb1}
Given $\mu_0, \mu_T \in \mathcal{P}(\mathbb{R}^d)$, find a feedback law $u(t,x)$ such that the solution of \eqref{eq:ctnteq} satisfies $\mu_T$ at time $T$.
\end{problem}

A relaxation of the problem when instead of controlling to $\mu_T$ exactly, we only want to control the support of the target measure.

\begin{problem}[Steering to a target set]
\label{prb2}
Given $\mu_0\in\mathcal{P}(\mathbb{R}^d)$ and target set $\Omega \subset \mathbb{R}^d$, find $u(t,x)$ such that \eqref{eq:ctnteq} holds with terminal measure support of $\mu_T$ lies in $\Omega$.
\end{problem}

The goal of this paper is to adapt ideas from flow matching in generative modeling \cite{lipmanflow,liu2022flow,albergo2023stochastic} to address Problems~\ref{prb1} and \ref{prb2}. One established approach to these problems is via \emph{optimal transport} \cite{villani2008optimal,santambrogio2015optimal}. Classical optimal transport theory assumes that all paths are admissible, but recent work has extended the theory to settings where the transport cost arises from an optimal control problem \cite{agrachev2009optimal,figalli2010mass,hindawi2011mass,chen2016optimal,elamvazhuthi2025optimal,elamvazhuthi2024benamou,einav2025equivalence,citti2025benamou}. In particular, \cite{elamvazhuthi2024benamou} established equivalence between the Benamou–Brenier and Kantorovich formulations of optimal transport, inspired by earlier work on averaging random vector fields through Young measures \cite{lisini2006absolutely,bernard2008young}. This perspective provides new insights into controllability properties of the continuity equation \cite{arguillere2017sub,duprez2019approximate}, though it may introduce issues of non-uniqueness.

Computational approaches to transport for control systems have also been studied extensively \cite{elamvazhuthi2018optimal,elamvazhuthi2023dynamical,ito2023entropic,wu2024duality,craig2025blob,elamvazhuthi2025optimal}. However, existing methods often face difficulties in high dimensions due to state-space gridding \cite{elamvazhuthi2018optimal,elamvazhuthi2023dynamical} or sampling complexity \cite{craig2025blob}. This motivates exploring whether generative modeling techniques, which have demonstrated remarkable scalability, can be adapted to transport problems in control.

Recent works have applied diffusion-based generative models to control problems. For example, \cite{grong2024score} extends score-based diffusion methods to a stochastic version of Problem~\ref{prb2} in the special case $f \equiv 0$. In \cite{elamvazhuthi2025score}, score-based diffusion is developed for more general initial and terminal measures in driftless control-affine and linear systems. The work \cite{mei2025time} further studies steering to Dirac measures for stochastic systems with drift. 

These developments naturally raise the question of whether \emph{flow matching} \cite{lipmanflow,liu2022flow,albergo2023stochastic} can also be adapted to control. Flow matching has implicitly been employed in the study of optimal transport with control costs, where it was used to establish feasibility results in \cite{elamvazhuthi2024benamou}, and more recently its applicability was demonstrated through numerical experiments on stochastic linear systems in \cite{mei2025flow}. These works highlight the potential of flow matching as a computational alternative to existing methods for measure interpolation, but they remain either primarily theoretical or restricted to special cases.

A related but distinct line of work from some of the ideas presented in this manuscript is \emph{adjoint flow matching} \cite{domingo2024adjoint}, which employs adjoint dynamics to guide a learned flow model post training. By contrast, our use of adjoint equations is parameterizing \emph{noising processes} whose time reversal yields feedback laws. This viewpoint enables us to address a broader family of control-affine systems and cost functions, and connects flow matching with classical tools from the Pontryagin maximum principle for the synthesis of feedback controls, without resorting to the solution of a Hamilton–Jacobi–Bellman equation.

Finally, we note that the perspective of lifting classical control problems to the space of measures, in the sense of Problem \ref{prb2}, is a known idea in control theory,   \cite{borkar2002convex,hernandez1996linear,lasserre2008nonlinear,majumdar2014convex}. These works motivate the contributions of the latter portions of the paper.

\section*{Contributions}

The main contributions of this paper are as follows:

\begin{enumerate}
    \item \textbf{Flow matching for control-affine systems.} 
    We extend flow matching, originally developed in generative modeling, to control-affine dynamics. By averaging along admissible trajectory–control pairs, we obtain feedback laws that solve the continuity equation exactly, and introduce approximate and output-based variants useful when exact steering or full-state matching is not feasible.

    \item \textbf{Feedback based steering to target sets via noising and time reversal.} 
    We propose a regular version of the viewpoint \cite{elamvazhuthi2025score,mei2025time} on state stabilization as constructing \emph{noising processes} whose time reversal yields feedback controls that steer all initial states to a target state or set. Two parameterizations are developed:
    \begin{enumerate}
        \item a \emph{randomized-control noising} scheme that uses regular (non-white noise) controls. This also potentially provides a model free approach to perform flow matching based control if one has access to a simulator of the time-reversed system.
        \item a \emph{PMP-based noising} scheme that leverages adjoint trajectories (exponential map in the driftless, minimum-energy case) to noise the system by propagating sampled trajectories onto the reachable set.    
    \end{enumerate}

    \item \textbf{Analytical properties of constructed measure interpolations} 
    We establish existence and regularity of the induced flows through optimal transport theory and properties of the exponential map \cite{agrachev2009any,agrachev2009optimal,rifford2005morse,rifford2009stabilization}, analyze support properties of measures relevant to stabilization, and connect our constructions with classical tools such as the Pontryagin maximum principle under conditions of  controllability and non-existence of the so called {\it abnormal extremals}.
\end{enumerate}

In addition to these contributions,  we also demonstrate the proposed methods on linear and nonlinear examples, illustrating measure-to-measure transport and stabilization.

\section{Flow Matching: From Generative Models to Control Systems}

Before addressing Problem~\ref{prb1}, we briefly recall the idea of \emph{flow matching} 
\cite{liu2022flow,lipmanflow,albergo2023stochastic}, which provides the main inspiration for this work. 
The presentation here follows the rectified flows framework of \cite{liu2022flow}, although we adopt the 
now-standard terminology of flow matching due to \cite{lipmanflow}.

\subsection{Flow matching in the unconstrained setting}

Suppose the goal is to construct a stochastic process $(X_t)_{t\in I}$ interpolating between two random 
variables $X_0 \sim \mu_0$ and $X_T \sim \mu_T$. A simple pathwise interpolation is given by
\[
S_t(x_0,x_T) = \frac{T-t}{T}x_0 + \frac{t}{T}x_T, \qquad t \in I,
\]
with the associated process
\[
X_t = S_t(X_0,X_T).
\]
Although $(X_t)$ connects the prescribed marginals, it is not, in general, the solution of a differential 
equation. To obtain a dynamical description, define the \emph{velocity field}
\[
v(t,x) = \mathbb{E}\big[\dot{X}_t \,\big|\, X_t = x\big],
\]
and consider the process $(Z_t)$ defined by the ODE
\[
\dot{Z}_t = v(t,Z_t).
\]
As argued in \cite{liu2022flow,lipmanflow,albergo2023stochastic}, the law of $Z_t$ coincides with that of 
$X_t$ for all $t\in I$.  This insight, is based on the known idea of {\it purification} of measure valued coefficients or controls of dynamical system  into deterministic functions by averaging \cite{balder1998lectures,gyongy1986mimicking,lisini2006absolutely,bernard2008young}. 

Thus, one can sample from $\mu_T$ by simulating this deterministic flow once 
$v$ is learned.

Algorithmically, the procedure is:

\begin{enumerate}
    \item Sample pairs $(X_0,X_T)$ from the joint distribution $\mu_0 \otimes \mu_T$ and form the 
    interpolation $X_t=S_t(X_0,X_T)$.
    \item Learn $v$ by solving the regression problem
    \[
    \min_{v} \int_0^T \mathbb{E}\!\left[ \| \dot{X}_t - v(t,X_t) \|^2 \right] dt.
    \]
    \item Generate new samples: set $Z_0 \sim \mu_0$ and integrate $\dot{Z}_t=v(t,Z_t)$ forward to obtain 
    $Z_T \sim \mu_T$.
\end{enumerate}

\subsection{Flow matching under control constraints}

In the control setting, interpolating paths cannot be chosen arbitrarily and they must satisfy the 
dynamics~\eqref{eq:ctrsys}. Following \cite{elamvazhuthi2024benamou}, for $I : = [0,T]$, we introduce an interpolation 
map 
\[
S : \mathbb{R}^d \times \mathbb{R}^d \to C(I;\mathbb{R}^d) \times L^2(I;\mathbb{R}^m),
\]
such that for each pair $(x_0,x_T)$, the trajectory $S^\omega_t(x_0,x_T)$ and control 
$S^u_t(x_0,x_T)$ satisfy
\[
\dot{S}^\omega_t(x_0,x_T) = f_0(S^\omega_t(x_0,x_T)) + \sum_{i=1}^m 
S^{u_i}_t(x_0,x_T) f_i(S^\omega_t(x_0,x_T)).
\]
Let $(X_t,U_t)$ denote the random process obtained by sampling $(X_0,X_T) \sim \gamma$, where 
$\gamma \in \mathcal{P}(\mathbb{R}^{2d})$ is a transport plan, and setting
\[
(X_t,U_t) = \big(S^\omega_t(X_0,X_T),\, S^u_t(X_0,X_T)\big).
\]

We define the feedback control law
\[
u(t,x) = \mathbb{E}[\,U_t \mid X_t = x\,],
\]
and consider the deterministic flow
\begin{equation}
\dot{Z}_t = f_0(Z_t) + \sum_{i=1}^m u_i(t,Z_t) f_i(Z_t).
\label{eq:finalflow}
\end{equation}
We will generalize this construction in coming sections and also prove that the following sections that $Z_t$ and $X_t$ have the same law, hence both processes 
realize the interpolation $\mu_t$. We summarize the generalization of flow matching in Algorithm \ref{alg:flowmatching}, which adapts the flow matching framework to control-affine dynamics. For numerical experiments verifying the efficacy of the algorithm we refer the reader to Section \ref{sec:numexp}. 



\begin{algorithm}
\caption{Flow Matching for Measure Interpolation of Control Systems}
\label{alg:flowmatching}
\begin{algorithmic}[1]
\State \textbf{Input:} initial and target measures $(\mu_0,\mu_T)$.
\State Sample pairs $(X_0,X_T) \sim \gamma$, where $\gamma$ is a coupling of $(\mu_0,\mu_T)$.
\State Define interpolation $(X_t,U_t)$ using the system-consistent map $S(x_0,x_T)$.
\State Learn feedback law $u(t,x)$ by regression:
\[
\min_{u} \int_0^T \mathbb{E}\!\left[ \| U_t - u(t,X_t) \|^2 \right] dt.
\]
\State Generate new samples: set $Z_0 \sim \mu_0$ and simulate
\[
\dot{Z}_t = f_0(Z_t) + \sum_{i=1}^m u_i(t,Z_t) f_i(Z_t).
\]
\State \textbf{Output:} terminal states $Z_T$ distributed according to $\mu_T$.
\end{algorithmic}
\end{algorithm}

\section{Notation and Preliminaries}
The constructions in the previous section rely on interpreting 
flow matching within the framework of control-affine systems. 
In order to make these connections precise, introduce generalizations and to prepare for the 
analysis that follows, we now introduce the notation, assumptions, 
and basic concepts used throughout the remainder of the paper. 
Unless otherwise stated, the following conventions will be adopted.

We work in the $d$-dimensional Euclidean space $\mathbb{R}^d$
with control space $\mathbb{R}^m$. Throughout this paper,
$I$ denotes the time interval with fixed terminal time $T > 0$.
We use $\langle \cdot, \cdot \rangle$ for the standard inner
product in Euclidean space, $|\cdot|$ for the Euclidean norm,
and $\|\cdot\|$ for other norms as specified by context. The Borel
$\sigma$-algebra on a topological space $X$ is denoted by
$\mathcal{B}(X)$.

For function spaces, we denote by $C(I; \mathbb{R}^d)$ the space
of continuous functions from $I$ to $\mathbb{R}^d$, equipped with
the supremum norm. The space $C^k(\mathbb{R}^d; \mathbb{R}^m)$
consists of $k$-times continuously differentiable functions,
while $C_b(X)$ denotes bounded continuous functions on $X$. For
measurable functions, $L^p(I; \mathbb{R}^m)$ represents the
Lebesgue space of $p$-integrable functions, and
$H^1(I; \mathbb{R}^d)$ denotes the Sobolev space of absolutely
continuous functions with square-integrable derivatives.

We denote by $\mathcal{P}(X)$ the space of Borel probability
measures on $X$, and by $\mathcal{P}_p(X)$ the subspace of
measures with finite $p$-th moment. For measures $\mu$ and $\nu$,
we write $\mu \ll \nu$ when $\mu$ is absolutely continuous with
respect to $\nu$. The pushforward of a measure
$\mu \in \mathcal{P}(X)$ under a measurable map $f: X \to Y$ is
denoted by $(f)_\#\mu \in \mathcal{P}(Y)$, defined by
$(f)_\#\mu(B) = \mu(f^{-1}(B))$ for Borel sets $B \subseteq Y$.
Product measures are denoted by $\otimes$, and $W_1(\mu,\nu)$
represents the $1$-Wasserstein distance between measures $\mu$ and
$\nu$.

In the context of control systems, we use $x(t) \in \mathbb{R}^d$
to denote deterministic state trajectories and
$u(t) \in U \subseteq \mathbb{R}^m$ for control inputs, where $U$
is the control constraint set. The dynamics are governed by
control-affine vector fields of the form
$f(x,u) = f_0(x) + \sum_{i=1}^m u_i f_i(x)$, where
$f_i: \mathbb{R}^d \to \mathbb{R}^d$ are vector fields for
$i = 0,1,\ldots,m$.

\begin{assumption}\label{asmp:sublin}
The drift and control vector fields satisfy the following
assumptions:
\begin{enumerate}
\item The vector fields $f_i$ are
$C^2(\mathbb{R}^d;\mathbb{R}^d)$ for each $i = 0,\ldots,m$. 
\item The vector fields $f_i$ have sublinear growth for each
$i = 0,\ldots,m$. That is, there exists $M>0$ such that 
\[|f_i(x)| \leq M(|x|+1)\]
for all $x \in \mathbb{R}^d$ and all $i = 0,\ldots,m$.
\end{enumerate}
\end{assumption}

We define projection maps
$\pi^t: I \times \mathbb{R}^d \times U \rightarrow I$,
$\pi^x: I \times \mathbb{R}^d \times U \rightarrow \mathbb{R}^d$
and $\pi^u: I \times \mathbb{R}^d \times U \rightarrow U$ by
\begin{align}
    \pi^t(t,x,u) = t, \quad
    \pi^x(t,x,u) = x, \quad
    \pi^u(t,x,u) = u \nonumber
\end{align}
for all $(t,x,u) \in I \times \mathbb{R}^d \times U$. Given any
$K \in \mathcal{P}(I \times X)$ with marginal
$\pi^t_{\#}K = \mathrm{leb}$ (Lebesgue measure on $I$), there
exists a corresponding disintegration $K_t$ such that 
\[ \int_{I \times X} f(t,x)dK(t,x) =
   \int_{I}\int_{X} f(t,x) dK_t(x)dt \]
for all functions $f \in C_b(I \times X)$. When we write $K_t$,
we mean the disintegration of $K$ with respect to the time
variable evaluated at $t$.

Let $\Gamma = C(I;\mathbb{R}^d)$ denote the space of continuous
trajectories. We define the set of admissible controls as
\[
\mathcal{U} = \{ u \in L^p(I;\mathbb{R}^m) \,;\,
u(t) \in U \text{ for a.e. } t \in I \}
\]
and the set of admissible trajectory–control pairs as
\[
\Omega := \{(\omega, u) \in \Gamma \times \mathcal{U} \;:\;
\dot{\omega}(t) = f(\omega(t), u(t)) \text{ for a.e. } t \in I \}.
\]

Given a map $S: M \rightarrow \Omega$ from a parameter space $M$
to the set of admissible pairs, we denote by $S^\omega_t(\cdot)$
and $S^u_t(\cdot)$ the trajectory and control components evaluated
at time $t$.

For stochastic elements, capital letters such as $X_t$ denote
stochastic processes or random variables. The expectation operator
is denoted by $\mathbb{E}[\cdot]$, with $\mathbb{E}[\cdot | \cdot]$
representing conditional expectation. We write $\mathbb{P}(A)$
for the probability of event $A$, and $X \sim \mu$ to indicate
that the random variable $X$ is distributed according to measure
$\mu$.

Regarding derivatives, we use $\partial_t = \frac{\partial}{\partial t}$
for partial derivatives with respect to time, and
$\nabla = \nabla_x$ for the gradient with respect to spatial
variables. The divergence operator is denoted by $\nabla \cdot$,
while $D_x f$ represents the Jacobian matrix of $f$ with respect
to $x$. For vector fields $f$ and $g$, their Lie bracket is
denoted by $[f,g]$.

A central object of study is the continuity equation for a
time-dependent probability measure
$I \ni t \mapsto \mu_t \in \mathcal{P}(\mathbb{R}^d)$ with vector field
$v: I \times \mathbb{R}^d \to \mathbb{R}^d$:
\begin{equation}
    \partial_t \mu_t + \nabla \cdot (v(t,x) \mu_t) = 0
    \quad \text{in } \mathbb{R}^d \times I,
\end{equation}
understood in the weak sense. That is, for all test functions
$\varphi \in C^\infty_c(\mathbb{R}^d \times I)$, we require
\begin{equation}
    \int_I \int_{\mathbb{R}^d} \left[
    \partial_t \varphi(t,x) +
    \langle \nabla_x \varphi(t,x), v(t,x) \rangle \right]
    \mu_t(dx) \, dt = 0.
\end{equation}
Accordingly, for control systems, the continuity equation for a
time-dependent probability measure
$\mu_t \in \mathcal{P}(\mathbb{R}^d)$ with control-affine dynamics
takes the form
\begin{equation}\label{eq:continuity}
    \partial_t \mu_t +
    \nabla \cdot (f(\cdot, u(t,\cdot)) \mu_t) = 0
    \quad \text{in } \mathbb{R}^d \times I,
\end{equation}
where $f(x,u) = f_0(x) + \sum_{i=1}^m u_i f_i(x)$ and
$u: I \times \mathbb{R}^d \rightarrow \mathbb{R}^m$ is a feedback
control law. This equation is understood in the weak sense as
before; for all test functions
$\varphi \in C^\infty_c(\mathbb{R}^d \times I)$,
\begin{equation}\label{eq:weak-continuity}
    \int_I \int_{\mathbb{R}^d} \left[
    \partial_t \varphi(t,x) +
    \langle \nabla_x \varphi(t,x), f(x, u(t,x)) \rangle \right]
    \mu_t(dx) \, dt = 0,
\end{equation}
with appropriate initial condition
$\mu_0 \in \mathcal{P}(\mathbb{R}^d)$ and, when specified,
terminal condition $\mu_T \in \mathcal{P}(\mathbb{R}^d)$.

We say that $(\{\mu_t\}_{t \in I})$ solves the continuity equation
if the above holds and additionally
$u(t,\cdot) \in L^2_{\text{loc}}(\mu_t)$ for almost every $t \in I$,
ensuring the integrability of the vector field along the measure.

\section{Analysis}
It is not immediately obvious that the process $Z_t$ is well defined and its distribution is identical to that of $X_t$. In this section, we establish this correspondence. This result has partially been already been established in \cite{elamvazhuthi2024benamou}, but we develop a more general notion of flow matching by allowing the domain of the interpolating map to be a general latent space $M$. In addition, we also introduce a notion of {\it approximate flow matching.}


\begin{theorem}[Flow Matching with Control System Constraints]
\label{thm:interpogen}
Let $M$ be a separable complete metric space and let $\gamma \in \mathcal{P}(M)$. Suppose $S: M \rightarrow \Omega \subseteq \Gamma \times \mathcal{U}$ is a measurable map defined $\gamma$-almost everywhere on $M$. Define the time-dependent probability measure $\mu_t \in \mathcal{P}(\mathbb{R}^d)$ for Lebesgue-almost every $t \in I$ by 
\[\mu_t := (S^{\omega}_t)_{\#} \gamma.\]

Then $\{\mu_t\}_{t \in I}$ solves the continuity equation 
\begin{equation}
\partial_t \mu_t + \nabla \cdot (f(\cdot, u(t, \cdot))\mu_t) = 0 \quad \text{in } \mathbb{R}^d \times I
\end{equation}
in the weak sense, where the feedback control $u: I \times \mathbb{R}^d \rightarrow \mathbb{R}^m$ is given by
\[u(t,x) = \int_{U} u \, d\eta_{t,x}(u)\]
for Lebesgue-almost every $t \in I$ and $\mu_t$-almost every $x \in \mathbb{R}^d$.

Here, $\eta_{t,x} \in \mathcal{P}(U)$ is the conditional probability measure satisfying the disintegration
\[d(S_t)_{\#} \gamma(x,u) = d\eta_{t,x}(u) \, d\mu_t(x),\]
where $S_t: M \rightarrow \mathbb{R}^d \times U$ is defined by $S_t(m) := (S^{\omega}_t(m), S^u_t(m))$ for $m \in M$.
\end{theorem}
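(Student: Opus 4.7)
The plan is to verify the weak formulation of the continuity equation by testing against an arbitrary $\varphi \in C^\infty_c(\mathbb{R}^d \times (0,T))$ and reducing the computation to the defining identity
\[
\dot S^\omega_t(m) = f(S^\omega_t(m), S^u_t(m))
\]
that holds for $\gamma$-almost every $m$ by the assumption $S(m) \in \Omega$. The crucial observation making this approach succeed for control-affine systems is the linearity of $f$ in the control variable, which turns the average of a function of $u$ into an evaluation at the averaged control $u(t,x)$.

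First, for each $m$ in the full-measure set where $S(m) \in \Omega$, the trajectory $t \mapsto S^\omega_t(m)$ is absolutely continuous, so by the chain rule together with the compact support of $\varphi$,
\[
0 = \int_I \big[ \partial_t \varphi(t, S^\omega_t(m)) + \langle \nabla_x \varphi(t, S^\omega_t(m)),\, f(S^\omega_t(m), S^u_t(m)) \rangle \big]\, dt.
\]
I would then integrate this identity against $\gamma(dm)$ and apply Fubini to exchange the order of integration. Using $\mu_t = (S^\omega_t)_\# \gamma$, the $\partial_t \varphi$ term and the drift $f_0$ contribution immediately transfer to integrals over $\mathbb{R}^d$ against $\mu_t$.

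The heart of the argument is the treatment of the control-dependent piece. Introducing the joint pushforward $(S_t)_\# \gamma$ on $\mathbb{R}^d \times U$ together with its disintegration $d\eta_{t,x}(u)\, d\mu_t(x)$, for each $i = 1, \dots, m$ I obtain
\[
\int_M S^{u_i}_t(m)\,\langle \nabla_x \varphi(t, S^\omega_t(m)), f_i(S^\omega_t(m)) \rangle\, d\gamma(m) = \int_{\mathbb{R}^d} \langle \nabla_x \varphi(t,x), f_i(x) \rangle \left( \int_U u_i\, d\eta_{t,x}(u) \right) d\mu_t(x).
\]
The inner integral is by definition $u_i(t,x)$. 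Summing over $i$ and adding back the drift contribution reassembles $f(x, u(t,x))$ inside the spatial gradient pairing, which together with the time derivative term yields the weak continuity equation.

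The main technical obstacles that I expect are the following. (i) Existence and joint measurability in $(t,x)$ of the disintegration $\eta_{t,x}$, which I would obtain by disintegrating the single measure on $I \times \mathbb{R}^d \times U$ defined as the pushforward of $\mathrm{leb} \otimes \gamma$ under $(t,m) \mapsto (t, S^\omega_t(m), S^u_t(m))$; this uses the separable Polish structure of $M$ and the classical disintegration theorem. (ii) Justification of Fubini, which follows from the sublinear growth in Assumption \ref{asmp:sublin}, the compactness of $\mathrm{supp}(\varphi)$, and an a priori bound on $\sup_{t \in I}|S^\omega_t(m)|$ obtained by Gr\"onwall applied to the ODE. (iii) Verification of the $L^2_{\mathrm{loc}}(\mu_t)$ integrability of $u(t,\cdot)$ required by the notion of weak solution, which follows from Jensen's inequality applied to $\eta_{t,x}$ combined with the $L^p$ integrability of $S^u$ built into the definition of $\mathcal{U}$. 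The affine-in-$u$ structure of $f$ is essential throughout: if $f$ were nonlinear in $u$, the resulting equation on $\mu_t$ would involve higher moments of $\eta_{t,x}$ rather than its mean, and no deterministic feedback law would in general recover the same measure evolution.
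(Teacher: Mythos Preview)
Your proposal is correct and follows essentially the same route as the paper: push $\lambda\otimes\gamma$ forward to a measure on $I\times\mathbb{R}^d\times U$, disintegrate in $(t,x)$, apply the chain rule to $\varphi(t,S^\omega_t(m))$, integrate against $\gamma$, and exploit the affine structure of $f$ to replace $\int_U f(x,u)\,d\eta_{t,x}(u)$ by $f(x,u(t,x))$. Your points (i)--(iii) are in fact more explicit than the paper's own proof, which does not spell out the Fubini justification or the $L^2_{\mathrm{loc}}(\mu_t)$ verification.
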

\begin{proof}
Let $g$ denote a generic test function whose domain will be clear from context. Using the measure $\gamma \in \mathcal{P}(M)$ and the map $S: M \rightarrow \Omega$, we define the measure $\eta \in \mathcal{P}(I \times \mathbb{R}^d \times U)$ by
\begin{align}
\int_{I \times \mathbb{R}^d \times U} g(t,x,u) \, d\eta(t,x,u) = \int_{M} \int_{I} g(t,S^\omega_t(m),S^u_t(m)) \, dt \, d\gamma(m)
\end{align}
for all $g \in C_b(I \times \mathbb{R}^d \times U)$. Equivalently, $\eta = F_{\#}(\lambda \otimes \gamma)$ where $F: I \times M \rightarrow I \times \mathbb{R}^d \times U$ is defined by $F(t,m) := (t,S^\omega_t(m),S^u_t(m))$ and $\lambda$ denotes Lebesgue measure on $I$.

We first verify that $\pi^t_{\#}\eta = \lambda$. For any $g \in C_b(I)$,
\begin{align}
\int_{I} g(t) \, d\pi^t_{\#}\eta(t) &= \int_{I \times \mathbb{R}^d \times U} g(\pi^t(t,x,u)) \, d\eta(t,x,u) \\
&= \int_{M} \int_{I} g(t) \, dt \, d\gamma(m) \\
&= \int_{I} g(t) \, dt.
\end{align}

Since $\pi^t_{\#}\eta = \lambda$, there exists a disintegration $\{\eta_t\}_{t \in I}$ with $\eta_t \in \mathcal{P}(\mathbb{R}^d \times U)$ for Lebesgue-almost every $t \in I$ such that
\begin{align}
\int_{I \times \mathbb{R}^d \times U} g(t,x,u) \, d\eta(t,x,u) = \int_{I} \int_{\mathbb{R}^d \times U} g(t,x,u) \, d\eta_t(x,u) \, dt
\end{align}
for all $g \in C_b(I \times \mathbb{R}^d \times U)$.

Since $S^\omega(m)$ solves the control system with control $S^u(m)$, for any $\varphi \in C^1_c(I \times \mathbb{R}^d)$ and $\gamma$-almost every $m \in M$,
\begin{align}
\frac{d}{dt}\varphi(t,S^\omega_t(m)) = \partial_t \varphi(t,S^\omega_t(m)) + \langle \nabla_x \varphi(t,S^\omega_t(m)), f(S^\omega_t(m),S^u_t(m)) \rangle
\end{align}
for almost every $t \in I$. Integrating from $0$ to $T$ and then with respect to $\gamma$,
\begin{align}
&\int_{M} [\varphi(T,S^\omega_T(m)) - \varphi(0,S^\omega_0(m))] \, d\gamma(m) \\
&= \int_{M} \int_{I} [\partial_t \varphi(t,S^\omega_t(m)) + \langle \nabla_x \varphi(t,S^\omega_t(m)), f(S^\omega_t(m),S^u_t(m)) \rangle] \, dt \, d\gamma(m).
\end{align}

Using the definition of $\mu_t = (S^\omega_t)_{\#}\gamma$ and $\eta$, this becomes
\begin{align}
&\int_{\mathbb{R}^d} \varphi(T,x) \, d\mu_T(x) - \int_{\mathbb{R}^d} \varphi(0,x) \, d\mu_0(x) \\
&= \int_{I} \int_{\mathbb{R}^d \times U} [\partial_t \varphi(t,x) + \langle \nabla_x \varphi(t,x), f(x,u) \rangle] \, d\eta_t(x,u) \, dt.
\end{align}

Since $\pi^x_{\#}\eta_t = \mu_t$, there exists a disintegration $\{\eta_{t,x}\}_{x \in \mathbb{R}^d}$ with $\eta_{t,x} \in \mathcal{P}(U)$ for $\mu_t$-almost every $x \in \mathbb{R}^d$ such that
\begin{align}
d\eta_t(x,u) = d\eta_{t,x}(u) \, d\mu_t(x).
\end{align}

Using the affine structure of $f(x,u) = f_0(x) + \sum_{i=1}^m u_i f_i(x)$, we have
\begin{align}
\int_{U} f(x,u) \, d\eta_{t,x}(u) = f\left(x, \int_{U} u \, d\eta_{t,x}(u)\right) = f(x,u(t,x))
\end{align}
where $u(t,x) = \int_{U} u \, d\eta_{t,x}(u)$.

Therefore,
\begin{align}
&\int_{\mathbb{R}^d} \varphi(T,x) \, d\mu_T(x) - \int_{\mathbb{R}^d} \varphi(0,x) \, d\mu_0(x) \\
&= \int_{I} \int_{\mathbb{R}^d} [\partial_t \varphi(t,x) + \langle \nabla_x \varphi(t,x), f(x,u(t,x)) \rangle] \, d\mu_t(x) \, dt.
\end{align}

Finally, we verify that $\mu_t = (S^\omega_t)_{\#}\gamma$ for Lebesgue-almost every $t \in I$. For any $g \in C_b(I \times \mathbb{R}^d)$,
\begin{align}
\int_{I} \int_{\mathbb{R}^d} g(t,x) \, d\mu_t(x) \, dt &= \int_{I \times \mathbb{R}^d \times U} g(t,x) \, d\eta(t,x,u) \\
&= \int_{M} \int_{I} g(t,S^\omega_t(m)) \, dt \, d\gamma(m) \\
&= \int_{I} \int_{\mathbb{R}^d} g(t,x) \, d(S^\omega_t)_{\#}\gamma(x) \, dt.
\end{align}
This completes the proof.
\end{proof}

This immediately gives us the following corollary justifying Algorithm \ref{alg:flowmatching}.

\begin{corollary}[Controlled Flow Matching with Initial and Final Measure Constraints]
\label{cor:flow-matching}
Let $\gamma \in \mathcal{P}_1(\mathbb{R}^d \times \mathbb{R}^d)$ be a transport plan with marginals $\pi^1_{\#}\gamma = \mu_0$ and $\pi^2_{\#}\gamma = \mu_T$. Suppose $S: \mathbb{R}^d \times \mathbb{R}^d \rightarrow \Omega$ is a measurable map defined $\gamma$-almost everywhere such that 
\[S^{\omega}_0(x_0,x_T) = x_0, \quad S^{\omega}_T(x_0,x_T) = x_T\]
for $\gamma$-almost every $(x_0,x_T) \in \mathbb{R}^d \times \mathbb{R}^d$. 

Define the time-dependent probability measure $\mu_t \in \mathcal{P}(\mathbb{R}^d)$ for Lebesgue-almost every $t \in I$ by 
\[\mu_t := (S^{\omega}_t)_{\#}\gamma.\]

Then $\{\mu_t\}_{t \in I}$ solves the continuity equation 
\begin{equation}
\partial_t \mu_t + \nabla \cdot (f(\cdot, u(t,\cdot))\mu_t) = 0 \quad \text{in } \mathbb{R}^d \times I
\end{equation}
in the weak sense, with initial condition $\mu_0 = \pi^1_{\#}\gamma$ and terminal condition $\mu_T = \pi^2_{\#}\gamma$. The feedback control $u: I \times \mathbb{R}^d \rightarrow \mathbb{R}^m$ is given by
\[u(t,x) = \int_{U} u \, d\eta_{t,x}(u)\]
for Lebesgue-almost every $t \in I$ and $\mu_t$-almost every $x \in \mathbb{R}^d$, where $\eta_{t,x} \in \mathcal{P}(U)$ is the conditional probability measure satisfying
\[d(S_t)_{\#}\gamma(x,u) = d\eta_{t,x}(u) \, d\mu_t(x),\]
with $S_t: \mathbb{R}^d \times \mathbb{R}^d \rightarrow \mathbb{R}^d \times U$ defined by $S_t(x_0,x_T) := (S^{\omega}_t(x_0,x_T), S^u_t(x_0,x_T))$.
\end{corollary}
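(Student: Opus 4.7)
The plan is to derive Corollary \ref{cor:flow-matching} as a direct specialization of Theorem \ref{thm:interpogen}. First, I would take the latent parameter space to be $M := \mathbb{R}^d \times \mathbb{R}^d$, which is a separable complete metric space as a product of two Polish spaces, and use the coupling $\gamma$ in the role of the latent measure. The measurable map $S: M \to \Omega$ supplied by the hypothesis then fits the setup of Theorem \ref{thm:interpogen} exactly, so applying it yields immediately that $\mu_t := (S^{\omega}_t)_{\#}\gamma$ solves the continuity equation in the weak sense, with feedback $u(t,x) = \int_U u \, d\eta_{t,x}(u)$ obtained from the disintegration of $(S_t)_{\#}\gamma$ along its spatial marginal $\mu_t$.

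Next, I would identify the boundary marginals. Using the endpoint hypotheses $S^{\omega}_0(x_0,x_T) = x_0$ and $S^{\omega}_T(x_0,x_T) = x_T$ for $\gamma$-almost every $(x_0,x_T)$, a direct pushforward computation gives $(S^{\omega}_0)_{\#}\gamma = \pi^1_{\#}\gamma = \mu_0$ and $(S^{\omega}_T)_{\#}\gamma = \pi^2_{\#}\gamma = \mu_T$, which match the prescribed initial and terminal measures.

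The main subtlety I anticipate is a question of which representative of the curve $t \mapsto \mu_t$ carries the boundary values: Theorem \ref{thm:interpogen} only delivers the identification $\mu_t = (S^{\omega}_t)_{\#}\gamma$ for Lebesgue-almost every $t \in I$, whereas the corollary asserts $\mu_0$ and $\mu_T$ at the \emph{specific} endpoints $t=0$ and $t=T$. To resolve this, I would invoke the standard fact that any distributional solution of the continuity equation with a velocity field satisfying the sublinear growth in Assumption \ref{asmp:sublin} admits a narrowly continuous representative on $I$, and then show that the curve $t \mapsto (S^{\omega}_t)_{\#}\gamma$ is itself narrowly continuous on all of $I$. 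The latter follows from the continuity of each individual trajectory $t \mapsto S^{\omega}_t(x_0,x_T)$ together with dominated convergence, using the sublinear growth bound to furnish an integrable envelope across the family indexed by $\gamma$. This lets me identify the Lebesgue-a.e.\ curve produced by the theorem with its continuous representative everywhere, and hence recover the boundary conditions literally at $t=0$ and $t=T$.

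Finally, the feedback law and its disintegration-based construction transfer verbatim from Theorem \ref{thm:interpogen}, since the latent space $M = \mathbb{R}^d \times \mathbb{R}^d$ merely specializes the abstract parameter space used there. This completes the plan: apply the theorem to the coupling $\gamma$, read off the initial and terminal marginals from the endpoint conditions on $S$, and upgrade the Lebesgue-a.e.\ identification to a pointwise one at the endpoints via narrow continuity.
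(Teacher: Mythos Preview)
Your proposal is correct and matches the paper's approach: the paper states the corollary immediately after Theorem \ref{thm:interpogen} with no separate proof, treating it as the direct specialization $M = \mathbb{R}^d \times \mathbb{R}^d$ that you describe. Your additional care about recovering the boundary values at $t=0$ and $t=T$ via narrow continuity is more than the paper provides, but note that the proof of Theorem \ref{thm:interpogen} already produces the weak formulation with explicit boundary terms $\int \varphi(T,\cdot)\,d\mu_T - \int \varphi(0,\cdot)\,d\mu_0$ using $(S^\omega_0)_{\#}\gamma$ and $(S^\omega_T)_{\#}\gamma$ directly, so the endpoint identification follows without invoking a continuous representative.
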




The above result indicates that we can potentially realize a process $Z_t$ for which the control $u(t,x)$ transports the system from $\mu_0$ to $\mu_T$. However, it is not immediate that such a process exists since the control $u(t,x)$ might not be Lipschitz. In existing literature in machine learning \cite{albergo2023stochastic,liu2022flow,lipmanflow}, extra assumptions are made in order to construct such a process. In the following result, the superposition principle from optimal transport theory \cite[Theorem 3.4]{ambrosio2014continuity} enables the construction of such a measure without Lipschitz property. On the other hand, the vector field could be extremely irregular, potentially causing non-unique solutions.

\begin{theorem}[Superposition Principle {\cite{ambrosio2014continuity}}]
\label{thm:superposition}
Let $v: I \times \mathbb{R}^d \rightarrow \mathbb{R}^d$ be a Borel measurable vector field. Suppose $\{\mu_t\}_{t \in I}$ with $\mu_t \in \mathcal{P}(\mathbb{R}^d)$ is a weak solution to the continuity equation
\begin{equation}
\partial_t \mu_t + \nabla \cdot (v(t,x) \mu_t) = 0 \quad \text{in } \mathbb{R}^d \times I
\end{equation}
satisfying the integrability condition
\[\int_I \int_{\mathbb{R}^d} \frac{|v(t,x)|}{1+|x|} \, d\mu_t(x) \, dt < \infty.\]

Then there exists a probability measure $\mathbb{P} \in \mathcal{P}(\Gamma)$ concentrated on solutions of the ODE
\begin{equation}
\dot{\omega}(t) = v(t,\omega(t)), \quad \omega(0) = \omega_0
\end{equation}
for $\mathbb{P}$-almost every $\omega \in \Gamma$, such that $(e_t)_{\#}\mathbb{P} = \mu_t$ for all $t \in I$.
\end{theorem}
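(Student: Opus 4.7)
The plan is a classical smoothing-and-compactness argument. I would first regularize both $v$ and $\mu_t$, use classical ODE theory to build a natural candidate measure on $\Gamma$ at the regularized level, and then pass to the limit using tightness together with a lower semicontinuity property that lets the ODE identity survive weak convergence, even though $v$ is only Borel measurable.

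Concretely, fix a standard mollifier $\rho_\epsilon$ on $\mathbb{R}^d$ and set
\[
\mu_t^\epsilon := \mu_t * \rho_\epsilon, \qquad v^\epsilon(t,x) := \frac{(v(t,\cdot)\mu_t) * \rho_\epsilon(x)}{\mu_t^\epsilon(x)}.
\]
A direct computation shows $\mu^\epsilon$ is a classical smooth solution of the continuity equation with velocity $v^\epsilon$, and $v^\epsilon$ is smooth in $x$ on the support of $\mu^\epsilon_t$. Classical ODE theory then produces a flow $\Phi^\epsilon$ of $v^\epsilon$, and I would set $\mathbb{P}^\epsilon := (\Phi^\epsilon_\cdot)_\# \mu_0^\epsilon \in \mathcal{P}(\Gamma)$, which tautologically satisfies $(e_t)_\#\mathbb{P}^\epsilon = \mu_t^\epsilon$. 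Tightness of $\{\mathbb{P}^\epsilon\}$ in $\mathcal{P}(\Gamma)$ follows from Jensen's inequality, which transfers the hypothesis $\int_I \int |v|/(1+|x|)\,d\mu_t\,dt < \infty$ into an analogous bound for $v^\epsilon$ against $\mu_t^\epsilon$, combined with a Gronwall estimate on $\log(1+|\omega(t)|)$ and an Arzel\`a--Ascoli-type criterion controlling the modulus of continuity in probability.

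Given a weakly convergent subsequence $\mathbb{P}^\epsilon \rightharpoonup \mathbb{P}$, the marginal identity $(e_t)_\#\mathbb{P} = \mu_t$ is immediate from continuity of the evaluation map $e_t$ together with $\mu_t^\epsilon \rightharpoonup \mu_t$. The remaining task is to show that $\mathbb{P}$-almost every curve satisfies $\omega(t) = \omega(0) + \int_0^t v(s,\omega(s))\,ds$. I would test against cylindrical functionals on $\Gamma$ of the form $\Psi(\omega(t_1),\dots,\omega(t_N))$, recast the discrepancy as a convex lower semicontinuous functional of the momentum-measure pair $(\mu_t, v\mu_t)$ lifted to path space, and exploit the fact that $v^\epsilon \mu_t^\epsilon \to v\mu_t$ in the sense of vector measures to conclude that the limiting discrepancy vanishes.

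The principal obstacle is precisely this last identification, because $v$ is only Borel measurable and one cannot pass to the limit in $\int_0^t v(s,\omega(s))\,ds$ curve-by-curve. The standard remedy, which I would implement in detail, is Ambrosio's observation that functionals of the form $(\mu, w) \mapsto \int F(w/\mu)\,d\mu$ with $F$ convex are lower semicontinuous under weak convergence of $(\mu, w)$; applied with an appropriate truncation that makes $F$ coercive yet consistent with the integrability hypothesis, this converts weak convergence $\mathbb{P}^\epsilon \rightharpoonup \mathbb{P}$ into the pointwise-in-time integral identity holding $\mathbb{P}$-almost surely. The sublinear growth in the hypothesis is used both to close Gronwall and to ensure the relevant commutator terms vanish in the limit.
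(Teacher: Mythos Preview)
The paper does not supply its own proof of this theorem: it is quoted verbatim as the Superposition Principle from \cite{ambrosio2014continuity} (referred to in the surrounding text as ``Theorem 3.4'' of that reference) and used as a black box to derive Theorem~\ref{thm:realization}. So there is no in-paper argument to compare your proposal against.

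That said, your outline is a faithful sketch of the standard proof in the cited reference: mollify the pair $(\mu_t, v\mu_t)$ to obtain a smooth velocity $v^\epsilon$ and a classical continuity solution $\mu_t^\epsilon$, lift to path space via the smooth flow, extract tightness from the integrability hypothesis and a Gronwall bound on $\log(1+|\omega|)$, and then identify the limit. You correctly flag the only genuinely delicate step, namely showing that the weak limit $\mathbb{P}$ is concentrated on integral curves of the merely Borel field $v$; the convex lower-semicontinuity device you describe is precisely the mechanism Ambrosio uses. One small caveat: the phrase ``$v^\epsilon$ is smooth in $x$ on the support of $\mu_t^\epsilon$'' slightly understates what is needed, since one must also ensure global existence of the flow on $[0,T]$, which is where the sublinear-growth integrability is first used and not only in the tightness step. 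Otherwise your plan matches the literature argument.
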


Given this theorem, we can state the following.

\begin{theorem}[Realization of Process]
\label{thm:realization}
Let $\gamma \in \mathcal{P}(M)$ be a probability measure on a separable complete metric space $M$. Suppose $S: M \rightarrow \Omega$ is a measurable map defined $\gamma$-almost everywhere on $M$. Additionally, assume that 
\begin{equation}
\int_{M} \int_I |S^u_t(m)| \, dt \, d\gamma(m) < \infty.
\end{equation}

Define $\mu_t := (S^{\omega}_t)_{\#}\gamma$ for $t \in I$. Then $\{\mu_t\}_{t \in I}$ solves the continuity equation  
\begin{equation}
\partial_t \mu_t + \nabla \cdot (f(\cdot, u(t,\cdot))\mu_t) = 0 \quad \text{in } \mathbb{R}^d \times I
\end{equation}
in the weak sense, with initial condition $\mu_0 = \pi^1_{\#}\gamma$ and terminal condition $\mu_T = \pi^2_{\#}\gamma$. with feedback control
\[u(t,x) = \int_{U} u \, d\eta_{t,x}(u)\]
where $\eta_{t,x} \in \mathcal{P}(U)$ is the conditional distribution of $U_t = S^u_t(M)$ given $X_t = x$.

Then there exists a probability measure $\mathbb{P} \in \mathcal{P}(\mathbb{R}^d \times \Gamma)$ such that 
\begin{equation}
\dot{\omega}(t) = f(\omega(t), u(t,\omega(t))), \quad \omega(0) = y
\end{equation}
for $\mathbb{P}$-almost every $(y,\omega) \in \mathbb{R}^d \times \Gamma$, and $(e_t)_{\#}\mathbb{P} = \mu_t$ for all $t \in I$.
\end{theorem}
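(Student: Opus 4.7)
The plan is to reduce the claim to a direct application of the superposition principle (Theorem \ref{thm:superposition}) with vector field $v(t,x) := f(x,u(t,x))$, where $u$ is the feedback from Theorem \ref{thm:interpogen}. The bulk of the work is verifying the hypotheses of Theorem \ref{thm:superposition} from the integrability assumption on $S^u$; once that is done, the conclusion follows almost immediately.

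First, I would invoke Theorem \ref{thm:interpogen} to conclude that the curve $\mu_t = (S^\omega_t)_\#\gamma$ solves the continuity equation with vector field $v(t,x) = f(x,u(t,x)) = f_0(x) + \sum_{i=1}^m u_i(t,x) f_i(x)$, and record the companion measure $\eta \in \mathcal{P}(I\times\mathbb{R}^d\times U)$ constructed in that proof, together with its disintegrations $\eta_t = \eta_{t,x}\otimes \mu_t$. Next I would verify the integrability hypothesis of Theorem \ref{thm:superposition}. Using Assumption \ref{asmp:sublin}, $|f_i(x)|\le M(|x|+1)$ gives
\[
\frac{|v(t,x)|}{1+|x|} \;\le\; M\Bigl(1 + |u(t,x)|\Bigr).
\]
Integrating against $\mu_t$ and $dt$, applying Jensen's inequality pointwise (so $|u(t,x)| = \bigl|\int_U u\,d\eta_{t,x}(u)\bigr|\le \int_U |u|\,d\eta_{t,x}(u)$), and then using the disintegration identity $d\eta_t(x,u) = d\eta_{t,x}(u)\,d\mu_t(x)$ together with the defining formula for $\eta$, one finds
\[
\int_I\!\!\int_{\mathbb{R}^d} \frac{|v(t,x)|}{1+|x|}\, d\mu_t(x)\, dt
\;\le\; M\,T + M\!\int_M\!\!\int_I |S^u_t(m)|\,dt\,d\gamma(m) \;<\;\infty,
\]
which is finite by hypothesis. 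A minor bookkeeping point is that $v$ is only defined $\mu_t$-a.e.\ on the support of $\mu_t$, so I would extend $u$ by zero to a Borel representative on $I\times\mathbb{R}^d$; this does not change any integrals involving $\mu_t$.

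With the integrability condition verified, Theorem \ref{thm:superposition} yields a measure $\mathbb{Q}\in\mathcal{P}(\Gamma)$ concentrated on absolutely continuous curves $\omega$ satisfying $\dot\omega(t) = f(\omega(t),u(t,\omega(t)))$ for a.e.\ $t$, with $(e_t)_\#\mathbb{Q} = \mu_t$ for every $t\in I$. To obtain a measure on $\mathbb{R}^d\times\Gamma$ as stated, I would push $\mathbb{Q}$ forward through the map $\omega \mapsto (\omega(0),\omega)$, setting $\mathbb{P} := (e_0,\mathrm{id})_\#\mathbb{Q}$. Then $\mathbb{P}$-a.e.\ pair $(y,\omega)$ automatically satisfies $\omega(0)=y$ together with the ODE, and $(e_t)_\#\mathbb{P} = (e_t)_\#\mathbb{Q} = \mu_t$.

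The main obstacle is really the Jensen-plus-disintegration step that converts the hypothesis $\int_M\!\int_I |S^u_t(m)|\,dt\,d\gamma(m)<\infty$ into the superposition integrability bound on $|v|/(1+|x|)$; everything else is essentially routine. A secondary, purely technical point to be careful about is measurability: the feedback $u(t,x)$ is obtained as a conditional expectation and is only defined $\mu_t$-a.e., so one must pick a Borel representative for use in the superposition statement. Because Theorem \ref{thm:superposition} only requires a Borel vector field and only "sees" the law $\mu_t$, any such representative works, and the choice does not affect the ODE satisfied $\mathbb{P}$-almost surely.
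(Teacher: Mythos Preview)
Your proposal is correct and follows essentially the same approach as the paper: invoke Theorem~\ref{thm:interpogen}, convert the hypothesis $\int_M\int_I|S^u_t(m)|\,dt\,d\gamma(m)<\infty$ into the bound $\int_I\int_{\mathbb{R}^d}|u(t,x)|\,d\mu_t(x)\,dt<\infty$ via the disintegration of $\eta$, combine with Assumption~\ref{asmp:sublin} to check the integrability condition of Theorem~\ref{thm:superposition}, and apply the superposition principle. Your version is in fact slightly more careful than the paper's, since you explicitly use Jensen's inequality (the paper writes an equality where an inequality belongs) and you spell out how to pass from $\mathbb{Q}\in\mathcal{P}(\Gamma)$ to $\mathbb{P}\in\mathcal{P}(\mathbb{R}^d\times\Gamma)$ via $(e_0,\mathrm{id})_\#$.
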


\begin{proof}
As in the proof of Theorem \ref{thm:interpogen}, let $\eta := F_{\#}(\lambda \otimes \gamma)$ be the pushforward of $\lambda \otimes \gamma$ under the map $F: I \times M \rightarrow I \times \mathbb{R}^d \times U$ defined by $F(t,m) := (t,S^\omega_t(m),S^u_t(m))$, where $\lambda$ denotes Lebesgue measure on $I$. We compute
\begin{align*}
\int_M \int_I |S^u_t(m)| \, dt \, d\gamma(m) 
&= \int_{I \times \mathbb{R}^d \times U} |u| \, d(F)_{\#}(\lambda \otimes \gamma)(t,x,u) \\
&= \int_{I \times \mathbb{R}^d \times U} |u| \, d\eta(t,x,u)\\
&= \int_I \int_{\mathbb{R}^d} \int_{U} |u| \, d\eta_{t,x}(u) \, d\mu_t(x) \, dt \\
&= \int_I \int_{\mathbb{R}^d} \left(\int_{U} |u| \, d\eta_{t,x}(u)\right) d\mu_t(x) \, dt \\
&= \int_I \int_{\mathbb{R}^d} |u(t,x)| \, d\mu_t(x) \, dt 
\end{align*}

This implies
\[\int_I \int_{\mathbb{R}^d} |u(t,x)| \, d\mu_t(x) \, dt < \infty.\]

Therefore, by Assumption \ref{asmp:sublin},
\begin{align*}
\int_I \int_{\mathbb{R}^d} \frac{|f(x,u(t,x))|}{1+|x|} \, d\mu_t(x) \, dt 
&\leq \int_I \int_{\mathbb{R}^d} \frac{M(1 + |x|)}{1+|x|} \, d\mu_t(x) \, dt \\
&\quad + M\sum_{i=1}^m \int_I \int_{\mathbb{R}^d} \frac{(1 + |x|)|u_i(t,x)|}{1+|x|} \, d\mu_t(x) \, dt \\
&= MT + M\sum_{i=1}^m \int_I \int_{\mathbb{R}^d} |u_i(t,x)| \, d\mu_t(x) \, dt < \infty.
\end{align*}

The result now follows from the superposition principle (Theorem \ref{thm:superposition}).
\end{proof}

From the above result, there exists a process $Z_t$ with law $\mathbb{P}$ such that $\mathbb{P}(Z_t \in A) = \mu_t(A)$ for all measurable sets $A \subseteq \mathbb{R}^d$.

A useful property of the constructed control is that if $S^u_t(m) \in U$ for all $m \in M$ and $t \in I$, where $U$ is a convex compact set, then $u(t,x) \in U$ for all $(t,x)$. This allows one to ensure control constraints are satisfied.

Many times one might not have an exact map $S(x_0,x_T)$ that steers $x_0$ to $x_T$ exactly, but only approximately. Towards this end, we establish a result on constructing approximate transport plans using approximate point-to-point steering controls.

\begin{theorem}[\textbf{Approximate Flow Matching}]
\label{thm:approximate-flow}
Let $\mu_0, \mu_T \in \mathcal{P}_1(\mathbb{R}^d)$ be compactly supported probability measures, and let $\gamma \in \Pi(\mu_0, \mu_T)$ be a transport plan. Suppose $S: \mathbb{R}^d \times \mathbb{R}^d \rightarrow \Omega$ is a measurable map defined $\gamma$-almost everywhere such that 
\[S^{\omega}_0(x_0,x_T) = x_0, \quad |S^{\omega}_T(x_0,x_T) - x_T| < \varepsilon\]
for $\gamma$-almost every $(x_0,x_T) \in \mathbb{R}^d \times \mathbb{R}^d$. Additionally, assume that 
\begin{equation}
\int_{\mathbb{R}^d \times \mathbb{R}^d} \int_{I} |S^u_t(x_0,x_T)|^2 \, dt \, d\gamma(x_0,x_T) < \infty.
\end{equation}

Then there exists a pair $(\{\mu_t\}_{t \in I}, u)$ that solves the continuity equation \eqref{eq:continuity} in the weak sense, with $\mu_0 = \pi^1_{\#}\gamma$ and $u(t,\cdot) \in L^2(\mu_t)$ for almost every $t \in I$, where
\[u(t,x) = \int_{U} u \, d\eta_{t,x}(u).\]

Moreover, defining $\tilde{\mu}_T := (S^{\omega}_T)_{\#}\gamma$, we have
\[W_2(\tilde{\mu}_T, \mu_T) \leq \varepsilon.\]
\end{theorem}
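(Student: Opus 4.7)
The proof plan is to apply Theorem \ref{thm:interpogen} directly with $M = \mathbb{R}^d \times \mathbb{R}^d$ and parameter measure $\gamma$, which immediately produces a family $\mu_t := (S^\omega_t)_\#\gamma$ satisfying the continuity equation in the weak sense with the feedback law $u(t,x) = \int_U u \, d\eta_{t,x}(u)$, where $\eta_{t,x}$ is the disintegration of $(S_t)_\#\gamma$ with respect to its first marginal $\mu_t$. The initial marginal matches $\pi^1_\#\gamma = \mu_0$ by the hypothesis $S^\omega_0(x_0,x_T) = x_0$. Hence the only things left to establish are (a) the square-integrability $u(t,\cdot) \in L^2(\mu_t)$ for a.e.\ $t$, and (b) the Wasserstein estimate $W_2(\tilde\mu_T, \mu_T) \leq \varepsilon$.

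For (a), I would use Jensen's inequality on the conditional average: for each $(t,x)$ with $\mu_t(\{x\}) > 0$ in the appropriate sense,
\[
|u(t,x)|^2 = \left| \int_U u \, d\eta_{t,x}(u) \right|^2 \leq \int_U |u|^2 \, d\eta_{t,x}(u).
\]
Integrating against $\mu_t(dx)\,dt$ and using the disintegration identity $d\eta_t(x,u) = d\eta_{t,x}(u)\,d\mu_t(x)$ together with the definition $\eta = F_\#(\lambda \otimes \gamma)$ from the proof of Theorem \ref{thm:interpogen}, we obtain
\[
\int_I \int_{\mathbb{R}^d} |u(t,x)|^2 \, d\mu_t(x)\,dt \leq \int_{\mathbb{R}^d \times \mathbb{R}^d} \int_I |S^u_t(x_0,x_T)|^2 \, dt \, d\gamma(x_0,x_T) < \infty,
\]
by assumption, giving the desired $L^2$ regularity in an integrated sense (and therefore $u(t,\cdot) \in L^2(\mu_t)$ for a.e.\ $t \in I$).

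For (b), the natural strategy is to exhibit an explicit coupling of $\tilde\mu_T$ and $\mu_T$ by pushing $\gamma$ through the map $G: \mathbb{R}^d \times \mathbb{R}^d \to \mathbb{R}^d \times \mathbb{R}^d$ defined by $G(x_0,x_T) := (S^\omega_T(x_0,x_T),\, x_T)$. Since the first marginal of $G_\#\gamma$ is $(S^\omega_T)_\#\gamma = \tilde\mu_T$ and the second is $\pi^2_\#\gamma = \mu_T$, we have $G_\#\gamma \in \Pi(\tilde\mu_T, \mu_T)$, so by definition of Wasserstein distance,
\[
W_2(\tilde\mu_T, \mu_T)^2 \leq \int_{\mathbb{R}^d \times \mathbb{R}^d} |S^\omega_T(x_0,x_T) - x_T|^2 \, d\gamma(x_0,x_T) \leq \varepsilon^2,
\]
using the pointwise bound $|S^\omega_T(x_0,x_T) - x_T| < \varepsilon$ $\gamma$-a.e. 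Taking square roots finishes the proof.

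I do not expect a serious obstacle here: the theorem is essentially a corollary of Theorem \ref{thm:interpogen} combined with the standard coupling construction for Wasserstein distance. The only subtlety is verifying that $\tilde\mu_T \in \mathcal{P}_2(\mathbb{R}^d)$ so that $W_2$ is finite, which follows from the compact support of $\mu_T$ (hence boundedness of $|x_T|$) and the $\varepsilon$-approximation $|S^\omega_T - x_T| < \varepsilon$, ensuring $\tilde\mu_T$ has bounded support as well.
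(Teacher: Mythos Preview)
Your proposal is correct and follows essentially the same approach as the paper: invoke Theorem~\ref{thm:interpogen} for the continuity-equation part, then bound $W_2(\tilde\mu_T,\mu_T)$ via the explicit coupling $(S^\omega_T,\pi^2)_\#\gamma$ and the pointwise estimate $|S^\omega_T(x_0,x_T)-x_T|<\varepsilon$. In fact you supply more detail than the paper does, since your Jensen-inequality argument for $u(t,\cdot)\in L^2(\mu_t)$ is left implicit there.
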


\begin{proof}
The first part of the result follows as in the case of exact flow matching. We establish the bound on the Wasserstein metric. Note that $\tilde{\mu}_T = (S^\omega_T)_{\#}\gamma$. Since $\gamma \in \Pi(\mu_0, \mu_T)$, we have $\pi^2_{\#}\gamma = \mu_T$. Hence,
\begin{align}
W^2_2(\tilde{\mu}_T, \mu_T) &= W^2_2((S^\omega_T)_{\#}\gamma, \pi^2_{\#}\gamma) \\
&\leq \int_{\mathbb{R}^d \times \mathbb{R}^d} |S^\omega_T(x_0,x_T) - x_T|^2 \, d\gamma(x_0,x_T) \\
&< \varepsilon^2.
\end{align}
\end{proof}

There are situations when one wants to steer probability measures not on the entire state space, but only on a subset of the configurations through an output map. Toward this end, we develop a notion of output flow matching.

\begin{theorem}[\textbf{Output Flow Matching}]
\label{thm:output-flow}
Let $h: \mathbb{R}^d \rightarrow \mathbb{R}^o$ be a measurable output map. Let $\mu_0 \in \mathcal{P}(\mathbb{R}^d)$ and $\nu_T \in \mathcal{P}_1(\mathbb{R}^o)$ be compactly supported measures. Let $\gamma \in \Pi(\mu_0, \nu_T)$ be a transport plan.

Suppose $S: \mathbb{R}^d \times \mathbb{R}^o \rightarrow \Omega$ is a measurable map defined $\gamma$-almost everywhere such that 
\[S^{\omega}_0(x, y) = x, \quad h(S^{\omega}_T(x, y)) = y\]
for $\gamma$-almost every $(x, y) \in \mathbb{R}^d \times \mathbb{R}^o$. Additionally, assume that 
\begin{equation}
\int_{\mathbb{R}^d \times \mathbb{R}^o} \int_{I} |S^u_t(x, y)|^2 \, dt \, d\gamma(x, y) < \infty.
\end{equation}

Then there exists a pair $(\{\mu_t\}_{t \in I}, u)$ that solves the continuity equation \eqref{eq:continuity} in the weak sense, with $\mu_0 = \pi^1_{\#}\gamma$ and $u(t,\cdot) \in L^2(\mu_t)$ for almost every $t \in I$, where
\[u(t,x) = \int_{U} u \, d\eta_{t,x}(u).\]

Moreover, $h_{\#}\mu_T = \nu_T = \pi^2_{\#}\gamma$.
\end{theorem}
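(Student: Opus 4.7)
The plan is to reduce the claim to Theorem \ref{thm:interpogen} applied with latent space $M := \mathbb{R}^d \times \mathbb{R}^o$, which is separable and complete, and with the given $\gamma \in \Pi(\mu_0,\nu_T)$. The map $S$ satisfies the hypotheses of that theorem (measurable into $\Omega$, defined $\gamma$-almost everywhere), so defining $\mu_t := (S^\omega_t)_\#\gamma$ and the disintegrations $\eta_{t,x}$ exactly as there produces a weak solution of the continuity equation \eqref{eq:continuity} together with the feedback law $u(t,x) = \int_U u \, d\eta_{t,x}(u)$. The initial condition $\mu_0 = \pi^1_\#\gamma$ is immediate from $S^\omega_0(x,y) = x$ and the marginal hypothesis on $\gamma$.

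Next I would check the $L^2(\mu_t)$ regularity. By Jensen's inequality applied to $\eta_{t,x}$, $|u(t,x)|^2 \leq \int_U |u|^2 \, d\eta_{t,x}(u)$, and integrating against $\mu_t$ and then over $t$ collapses through the disintegration identity $d\eta(t,x,u) = d\eta_{t,x}(u)\,d\mu_t(x)\,dt$ used in the proof of Theorem \ref{thm:interpogen}, to give
\[
\int_I \int_{\mathbb{R}^d} |u(t,x)|^2 \, d\mu_t(x)\, dt \;\leq\; \int_{\mathbb{R}^d \times \mathbb{R}^o} \int_I |S^u_t(x,y)|^2 \, dt\, d\gamma(x,y) \;<\; \infty,
\]
which in particular yields $u(t,\cdot) \in L^2(\mu_t)$ for almost every $t \in I$.

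The only genuinely new ingredient is the output identity $h_\#\mu_T = \nu_T$, and this is essentially a one-line pushforward computation. Since $\mu_T = (S^\omega_T)_\#\gamma$ by the construction above, functoriality of pushforwards gives $h_\#\mu_T = (h \circ S^\omega_T)_\#\gamma$. The hypothesis $h(S^\omega_T(x,y)) = y$ for $\gamma$-almost every $(x,y)$ means that the map $h \circ S^\omega_T$ agrees $\gamma$-a.e.\ with the coordinate projection $\pi^2 : \mathbb{R}^d \times \mathbb{R}^o \to \mathbb{R}^o$, so $(h \circ S^\omega_T)_\#\gamma = \pi^2_\#\gamma = \nu_T$ by the marginal assumption on $\gamma$.

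I do not expect a real obstacle in this argument: the heavy lifting (disintegration, affine-in-$u$ averaging, and the weak continuity equation) was already done in Theorem \ref{thm:interpogen}, and the output condition is a clean measure-theoretic identity. The only point requiring a brief justification is the reduction to that theorem in spite of $M = \mathbb{R}^d \times \mathbb{R}^o$ rather than $\mathbb{R}^d \times \mathbb{R}^d$, and the fact that the compact support of $\mu_0$ and $\nu_T$ together with the $L^2$ control bound suffice to ensure all integrals appearing in the disintegration and in the verification of the weak formulation are finite.
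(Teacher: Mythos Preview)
Your proposal is correct and follows essentially the same route as the paper: apply Theorem \ref{thm:interpogen} with latent space $M=\mathbb{R}^d\times\mathbb{R}^o$ to obtain $(\{\mu_t\},u)$, read off $\mu_0=\pi^1_\#\gamma$ from $S^\omega_0(x,y)=x$, and compute $h_\#\mu_T=(h\circ S^\omega_T)_\#\gamma=\pi^2_\#\gamma=\nu_T$ by functoriality of pushforwards. Your Jensen-based verification of $u(t,\cdot)\in L^2(\mu_t)$ is in fact more explicit than the paper's own proof, which states this conclusion but does not spell out the argument.
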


\begin{proof}
Since $\gamma \in \Pi(\mu_0, \nu_T)$ is already a transport plan between the state space and output space, we can use it directly without lifting.

Define $\mu_t := (S^\omega_t)_{\#}\gamma$ for $t \in I$. By assumption, $\mu_0 = (S^\omega_0)_{\#}\gamma = \pi^1_{\#}\gamma$ since $S^\omega_0(x,y) = x$.

For the terminal time, we have:
\begin{align}
h_{\#}\mu_T &= h_{\#}(S^\omega_T)_{\#}\gamma \\
&= (h \circ S^\omega_T)_{\#}\gamma \\
&= \pi^2_{\#}\gamma \\
&= \nu_T,
\end{align}
where the second-to-last equality uses that $h(S^\omega_T(x,y)) = y$ for $\gamma$-almost every $(x,y)$.

The existence of the control $u$ satisfying the continuity equation follows from Theorem \ref{thm:interpogen} applied to the measure $\gamma$ and map $S$.
\end{proof}

In other words, even if exact matching is impossible, we can construct approximate controls that achieve $\varepsilon$-closeness in Wasserstein distance.

\section{Stabilization to sets using Flow matching and Time-reversals}

So far we have developed flow-matching constructions for transporting probability measures under control-affine dynamics, with exact, approximate, and output-based variants. These tools provide scalable ways to steer distributions of states between prescribed initial and terminal measures.  

We now turn to a complementary but more fundamental problem in control. That is \emph{steering to points or target sets}. Rather than transporting one distribution to another, the objective here is to design controllers that drive trajectories toward a desired equilibrium or invariant set. Our approach builds on the same flow-matching philosophy, but incorporates a new element in terms of the time-reversal viewpoint inspired by denoising diffusion models \cite{song2020score}.  

In this section we reinterpret stabilization as a denoising task: suitable excitations are introduced to ``noise'' the system, and time reversal of this process yields feedback laws that achieve steering to the target set. We present two specific constructions of such noising processes. One based on randomized controls and one based on PMP extremals, and analyze their properties. These methods extend the flow-matching framework to stabilization, bridging measure-transport and classical control.

\subsection{Flow Matching using Randomized Controls and Time-reversal}

In this section, we introduce the randomized controls based approach to noise the system taking inspiration from Denoising diffusion based approach to steering a system to a target configuration \cite{elamvazhuthi2025score,mei2025flow}. However, rather than injecting the system with white noise, we use probability measures on the set of controls, to noise the system. This provides a regular alternative to noise and denoise the system. See Figure \ref{fig:flow_matching_schematic}
 for a visualization of the idea and Algorithm \ref{alg4} for the corresponding implementation.

\begin{figure}[t]
    \centering
    
    \includegraphics[width=0.9\linewidth]{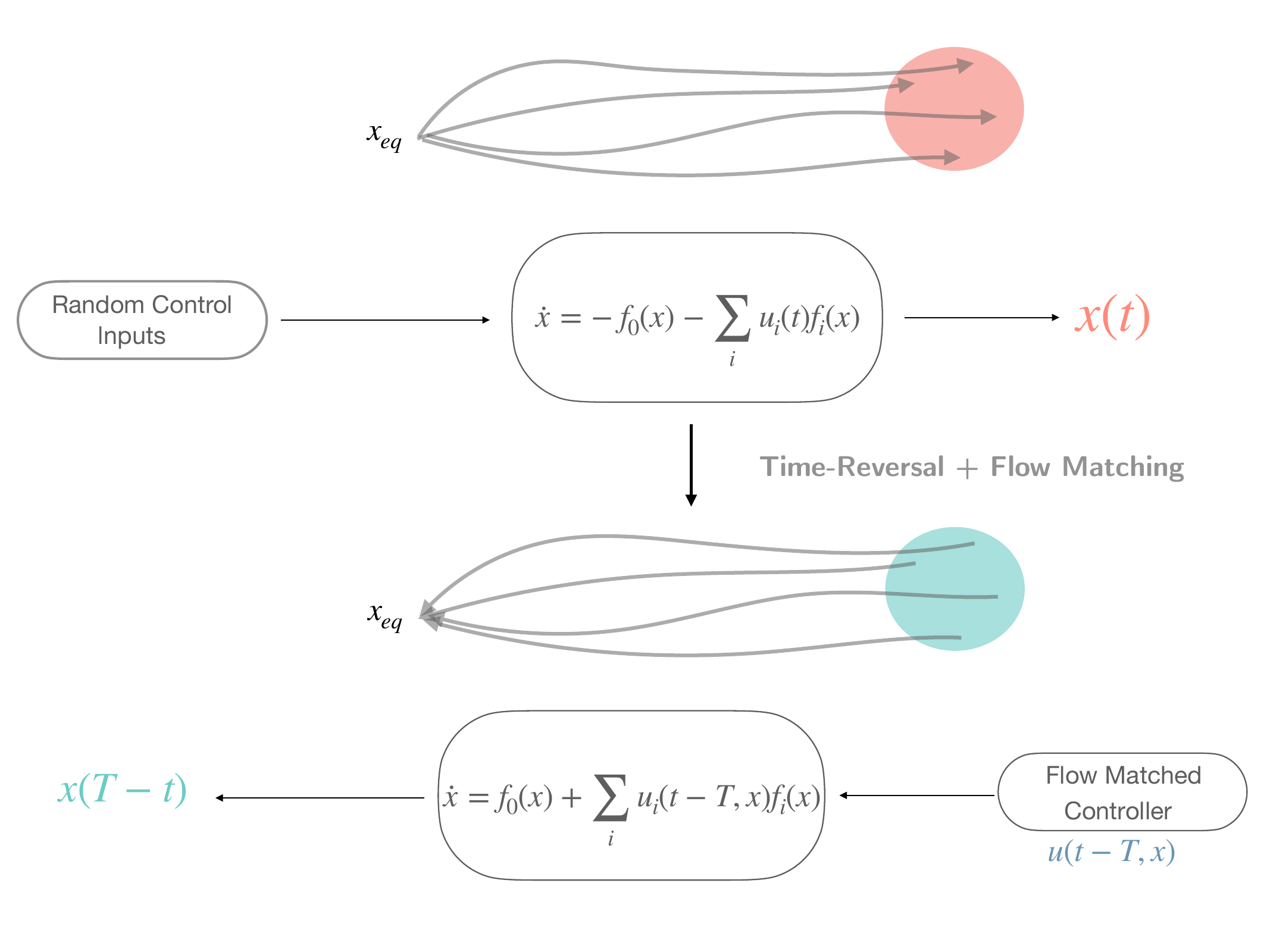}
    
    \caption{
        \textbf{Randomized Control Sampling Flow matching.}
        Random control inputs applied to the time-reversed system 
        produce noisy trajectories (\textit{noising process}, top). 
        The learned reversed controller denoises the system, 
        guiding trajectories back to desired states 
        (\textit{denoising process}, bottom). 
        The red ball illustrates the evolution of a single system state $x(t)$ 
        under both processes.
    }
    \label{fig:flow_matching_schematic}
\end{figure}

\begin{algorithm}
\caption{Flow Matching with Noisy Control}
\begin{algorithmic}[1]
\State  Fix compact target set $\Omega \subseteq \Rd$. Sample initial condition $x  \sim \mu_0$, where $\mu_0$ is supported on $\Omega$. 

\State Independently, sample $\Xi \sim \nu_0$ and a standard Brownian motion $(B_t)_{t\in[0,T]}$.

\State Define the interpolation path $X_t$ and the corresponding control
\[
U_t=\Xi+B_t
\]
using the map $S_{X_0}(U)$ defined according to \eqref{eq:ctrsys}.
\State Learn control law $u: I \times \mathbb{R}^d \rightarrow \mathbb{R}^m$ by solving the regression problem:
\[\min_{u} \int_I \mathbb{E}\left[\|U_t - u(t, X_t)\|^2\right] dt\]

\State Sample new data via learned flow.
  Sample from $Z_T$ according to some probability distribution and solve the ODE in reverse according to:
    \[
    \dot{Z}_t = f_0(Z) + \sum_{i=1}^m u_i(t, Z_t) f_i(Z_t)
    \]
    or equivalently  sample from $\tilde{Z}_0$ according to some probability distribution and solve the ODE:
    \[
    \dot{\tilde{Z}}_t = -f_0(\tilde{Z}) - \sum_{i=1}^m u_i(T-t, \tilde{Z}_t) f_i(Z_t)
    \]
    \State Output $Z_0$.
\end{algorithmic}
\label{alg4}
\end{algorithm}

\begin{theorem}
   \label{thm:randcongr}
    \textbf{(Controlled Flow Matching with Random Control)}
Consider the probability measure $\gamma \in \mathcal{P}(\mathcal{U})$. Suppose $S: \mathcal{U} \rightarrow \Gamma \times \mathcal{U}$ is a measurable map defined $\gamma$ almost everywhere on $M$, such that $S^u_t(\alpha) = \alpha$ and $S^{\omega}_0(\alpha) = x$ for $\gamma$ almost every $u \in \mathcal{U}$.
Additionally, assume that 
\begin{equation}
\label{eq:finengy}
\int_{\mathcal{U}}\int_{0}^T |S^u_t(m)|^2dt d\gamma(m) <\infty
\end{equation}
Let $\mu_t$ be the distribution of $S^{\omega}_t(Y)$.
That is, $\mu_t : = (S^{\omega}_t)_{\#} \gamma$. Then $\mu_t$ solves the continuity equation \eqref{eq:ctnteq}.  for almost every $t \in I$, given by
\[u(t,x) = \int_{U} u d\eta_{t,x}(u)\]
where $t \mapsto \eta_{t,x}(u)$ is the conditional distribution  $U_t = S^u_t(Y)$ given $X_t = x$.
\end{theorem}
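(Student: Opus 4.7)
}
The statement is essentially a specialization of the general flow-matching results (Theorems \ref{thm:interpogen} and \ref{thm:realization}) to the latent space $M = \mathcal{U}$ with the identity-on-the-control parametrization $S^u_t(\alpha) = \alpha(t)$. So the plan is to verify the hypotheses of those two theorems and invoke them directly.

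First I would confirm that $M = \mathcal{U}$ qualifies as a separable complete metric space. Since $\mathcal{U}$ is defined as the set of $L^p$ controls taking values in $U$ a.e.\ on $I$, it inherits the standard $L^p$ metric, and it is a closed (hence complete) subset of a separable Banach space when $U$ is closed. Next, I would check that $S: \mathcal{U} \to \Omega$ is well defined and measurable $\gamma$-a.e. Global existence of $\omega = S^\omega(\alpha)$ on $I$ with $\omega(0) = x$ follows from the sublinear growth Assumption \ref{asmp:sublin} combined with a Gronwall estimate, using the finite-energy bound \eqref{eq:finengy} to control $\int_I |\alpha(t)|\, dt$ via Cauchy--Schwarz:
\[
\int_I |\alpha(t)|\,dt \;\leq\; T^{1/2}\left(\int_I |\alpha(t)|^2\,dt\right)^{1/2}.
\]
Measurability of the solution operator $\alpha \mapsto S^\omega(\alpha)$ from $\mathcal{U}$ (with the $L^p$ topology) to $\Gamma = C(I; \mathbb{R}^d)$ is standard: under Assumption \ref{asmp:sublin}, the map is in fact continuous on bounded subsets of $\mathcal{U}$, and we can extend measurability globally via exhaustion by such sets. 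Since $S^u(\alpha) = \alpha$ is trivially measurable, the full map $S: \mathcal{U} \to \Omega$ is measurable.

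With these verifications in place, Theorem \ref{thm:interpogen} applies directly and produces the disintegration $\eta_{t,x}$ of $(S_t)_{\#}\gamma$ with respect to $\mu_t = (S^\omega_t)_\#\gamma$, along with the averaged feedback
\[
u(t,x) \;=\; \int_U u\, d\eta_{t,x}(u),
\]
and shows that $\{\mu_t\}_{t \in I}$ solves the continuity equation \eqref{eq:continuity} in the weak sense. To upgrade to the statement in Theorem \ref{thm:realization} (and thus obtain an actual trajectory-level process), I would verify the $L^1$-in-space-time integrability condition required there. Using \eqref{eq:finengy} and Cauchy--Schwarz on the product measure $\lambda \otimes \gamma$,
\[
\int_{\mathcal{U}}\int_I |S^u_t(\alpha)|\, dt\, d\gamma(\alpha) \;\leq\; T^{1/2}\left(\int_{\mathcal{U}}\int_I |S^u_t(\alpha)|^2\, dt\, d\gamma(\alpha)\right)^{1/2} \;<\; \infty,
\]
which is precisely the hypothesis of Theorem \ref{thm:realization}. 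This yields the superposition measure $\mathbb{P}$ and completes the proof.

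The main obstacle I anticipate is the measurability/regularity of the flow map $\alpha \mapsto S^\omega(\alpha)$, which is not automatic for nonlinear control-affine systems with merely $L^2$ controls; one typically needs either uniform boundedness of the $f_i$ (as in Proposition \ref{prop:pmp-existence}) or a local Lipschitz argument combined with the a priori bound provided by \eqref{eq:finengy} to rule out blow-up $\gamma$-almost surely. Once this is settled, the theorem follows as a direct corollary of the earlier general results, so no new ideas beyond those already developed are required.
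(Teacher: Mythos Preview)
Your proposal is correct and aligns with the paper's intent: the paper does not supply a separate proof of Theorem~\ref{thm:randcongr}, treating it as an immediate instance of Theorem~\ref{thm:interpogen} with latent space $M=\mathcal{U}$ and $S^u_t(\alpha)=\alpha(t)$. Your reduction to Theorems~\ref{thm:interpogen} and~\ref{thm:realization}, together with the Cauchy--Schwarz passage from the $L^2$ bound \eqref{eq:finengy} to the $L^1$ hypothesis, is exactly the expected argument; note only that the theorem as stated asks merely for the continuity equation, so invoking Theorem~\ref{thm:realization} is a bonus rather than a requirement.
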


An interesting question in this case is what are good choices of probability measures $\mathcal{P}(\mathcal{U})$, on the set of controls . In denoising diffusion \cite{song2020score}, one can loosely say that the choice is white noise. However, as demonstrated in this section, one can take more regular controls. For instance, if one takes $B(t)$ to be the Brownian motion on $\Rd$. Then $B$ defines a measure on $C([0,T];\Rd)$ known as the {\it Wiener measure} \cite{bogachev2010differentiable}.  Then one can prove a regularity result for the flow $\mu_t$. Toward this end, we will need the following definition.

\begin{definition}[End Point Map of time-reversed system]
\label{def:endpoint}
We define the end point map $E: \mathcal{U} \rightarrow \mathbb{R}^d$ by $E(u) = \omega(T)$, where $\omega \in \Gamma$ solves the control system with control $u \in \mathcal{U}$, i.e., $\dot{\omega}(t) = -f(\omega(t), u(t))$ with $\omega(0) = x$.
\end{definition}

The surjectivity of the differential of this map ensures that small variations in the control produce variations that span the entire state space, preventing the formation of singular curves. A common assumption that is required in optimal control theory is this surjectivity property.

\begin{assumption}[Non-existence of Singular Curves]
\label{asmp:nonsingular}
For every $u \in \mathcal{U}$, the differential $dE(u): L^2(I; \mathbb{R}^m) \rightarrow \mathbb{R}^d$ is surjective.
\end{assumption}

Additionally, to characterize the support of the measures we will need the following natural controllability assumption on the range of the end point map.

\begin{definition}[Global Controllability of Time-Reversed System]
\label{def:controllable}
Given $x \in \mathbb{R}^d$, the system is globally controllable from $x$ at time $T$ if for every $\tilde{x} \in \mathbb{R}^d$, there exists a control $u \in L^2(I; \mathbb{R}^m)$ such that the solution satisfies $\omega(0) = x$ and $\omega(T) = \tilde{x}$. 
\end{definition}

Given these definitions and assumptions we have the following result on the support and regularity properties of $\mu_t$.

\begin{theorem}
Given Assumption \ref{asmp:sublin}. Let $\nu_0 \in \mathcal P_2(\mathbb R^m)$, and let
$U_t = \Xi + B_t, \quad t\in[0,T]$,
where $B$ is a standard Brownian motion in $\mathbb R^m$ and $\Xi$ is an
$\mathbb R^m$-valued random variable, independent of $B$, with law $\nu_0 \in \mathcal{P}_(\mathbb{R}^m)$.
Let $\gamma_{\nu_0} \in \mathcal P(C([0,T];\mathbb R^m))$ be the law of $U$.

Then $\gamma_{\nu_0}$ satisfies the hypotheses of Theorem \ref{thm:randcongr}.
If $\mu_t$ denotes the corresponding state law from Theorem \ref{thm:randcongr},
then for every $t\in(0,T]$,
\[
\operatorname{supp}\mu_t = \overline{R^{\nu_0}_{t,x}},
\]
where
\[
R^{\nu_0}_{t,x}
:=
\Bigl\{
\omega(t) \;:\;
(\omega,u)\in \Omega_t,\;
u\in C([0,t];\mathbb R^m),\;
u(0)\in \operatorname{supp}\nu_0
\Bigr\},
\]
and $\Omega_t$ denotes the set of admissible state-control pairs on $[0,t]$.

If the time-reversed system is globally controllable from $x$ at time $t$,
then $
\operatorname{supp}\mu_t = \mathbb R^d.$

Moreover, if Assumption \ref{asmp:nonsingular} holds, then $
\mu_t$ is absolutely continuous with respect to the Lebesgue measure for all $ t\in(0,T]$
\end{theorem}
\begin{proof}
First, since $\nu_0 \in \mathcal{P}(\R^m)$ and $B$ has finite second moments on $[0,T]$,
\[
\int_{C([0,T];\mathbb R^m)} \int_0^T |u(s)|^2\,ds \, d\gamma_{\nu_0}(u)
=
\mathbb E\!\left[\int_0^T |U_s|^2\,ds\right]
< \infty.
\]
Hence $\gamma_{\nu_0}$ satisfies the integrability condition in
Theorem \ref{thm:randcongr}.

For each $a\in\mathbb R^m$, let $\gamma_a$ denote the law of the process
$a+B$. Then
$
\gamma_{\nu_0} = \int_{\mathbb R^m} \gamma_a \, d\nu_0(a).$
Moreover, \[
\operatorname{supp}\gamma_a
=
\{u\in C([0,T];\mathbb R^m): u(0)=a\},\]
so
\[
\operatorname{supp}\gamma_{\nu_0}
=
\{u\in C([0,T];\mathbb R^m): u(0)\in \operatorname{supp}\nu_0\}.
\]

Let $E_t:C([0,t];\mathbb R^m)\to\mathbb R^d$ be the end-point map defined by
$E_t(u)=\omega(t)$, where $\omega$ solves
\[
\dot\omega(s)=-f(\omega(s),u(s)),\qquad \omega(0)=x.
\]
By continuous dependence of solutions on controls, $E_t$ is continuous.
Therefore,
\[
\operatorname{supp}\mu_t
=
\operatorname{supp}(E_t)_{\#}\gamma_{\nu_0}
=
\overline{E_t(\operatorname{supp}\gamma_{\nu_0})}
=
\overline{R^{\nu_0}_{t,x}}.
\]

If the time-reversed system is globally controllable from $x$ at time $t$,
then the reachable set under square integrable controls, $L^2([0,t];\mathbb R^m)$, is all of $\mathbb R^d$.
Fix any $a\in\operatorname{supp}\nu_0$. Continuous controls satisfying
$u(0)=a$ are dense in $L^2([0,t];\mathbb R^m)$, since one may approximate
any square integrable control by continuous controls and then modify it on a small
initial interval to enforce the value $u(0)=a$ without changing the
$L^2$ norm significantly. Hence, by continuity of the end-point map,
$\overline{R^{\nu_0}_{t,x}}=\mathbb R^d$, proving that
$\operatorname{supp}\mu_t=\mathbb R^d$. Here, we have used the fact that sample paths of Brownian motion are dense in the space of continuous functions with initial value equal to $0$ \citep[Page 30]{bogachev2010differentiable}.

The absolute continuity of $\mu_t$ follows from Assumption \eqref{asmp:nonsingular} and \citep[9.2.5. Corollary]{bogachev2010differentiable}. Note that the condition required in the cited corollary that $\gamma$ be ``continuous along vectors from a dense set" follows from \citep[3.1.9. Theorem]{bogachev2010differentiable}.  For each $a \in \mathbb{R}^d$, $
(E_t)_{\#}\gamma_a $ is absolutely continuous with respect to the Lebesgue measure. 
Since
\[
\mu_t(A)
=
(E_t)_{\#}\gamma_{\nu_0}(A)
=
\int_{\mathbb R^m} (E_t)_{\#}\gamma_a(A) \, d\nu_0(a),
\]
for all Borel measurable sets $A \subseteq C([0,T];\mathbb{R}^d)$, the absolute continuity of $\mu_t$  follows.
\end{proof}

An interesting straightforward extension of the randomized-control construction is to impose \textbf{hard control constraints} directly at the level of the sampling law. Let $K \subset \mathbb R^m$ be a closed convex set, and let $\Pi_K : \mathbb R^m \to K$ denote the metric projection. Instead of sampling the control from unconstrained Brownian motion, one may take
\[
U_t := \Pi_K(B_t),
\]
where $B$ is a Brownian motion in $\mathbb R^m$ with initial condition chosen in $K$. Since $\Pi_K$ is continuous and satisfies $\Pi_K(v)=v$ for every $v\in K$, the law of the process $U$ is supported on the constrained path space
\[
\mathcal C_K := \{u \in C([0,T];\mathbb R^m) : u(t)\in K \text{ for all } t\in[0,T]\}.
\]
In fact, this support is exactly $\mathcal C_K$ and every constrained continuous path is fixed by the projection map, while the support of Brownian motion is the full space of continuous paths with the prescribed initial value. Thus, projected Brownian controls provide a natural way to randomize only over admissible controls.

This fits naturally into the flow-matching framework of Theorem \ref{thm:randcongr} since if all sampled controls take values in a convex compact set, then the averaged feedback law $u(t,x) : =\int ud\eta_x (u)$ obtained by conditional expectation also takes values in that same set. Consequently, the learned controller remains admissible at every time.

From the point of view of the induced state measures, the support of $\mu_t$ is then the closure of the reachable set generated by constrained continuous controls, by continuity of the end-point map together with the support characterization of pushforwards. In other words, replacing Brownian motion by its projection onto $K$ restricts the noising process to trajectories generated by controls in $K$, and therefore restricts the marginals $\mu_t$ to the corresponding constrained reachable region.

If, in addition, $K$ is compact, then the controls are uniformly bounded. Under Assumption \ref{asmp:sublin}, this implies that every corresponding state trajectory remains in a bounded set depending only on $x_0$, $T$, and $K$. Hence the support of $\mu_t$ is contained in a compact neighborhood of $x_0$. This gives a simple heuristic for a stabilizing effect of the time-reversed system. That is, if the forward noising dynamics remain confined to a controlled neighborhood of $x_0$, the reversed dynamics are expected to steer trajectories back toward that same region, and more generally toward the target set around which the noising process was constructed. 

This viewpoint provides a natural bridge from the support and localization properties of the forward noising process to a practical reverse-time stabilization scheme. In particular, once the forward randomized dynamics have been constructed and the corresponding feedback law has been learned, the reverse-time system may be viewed as transporting mass back toward the target region encoded by $\mu_0$. This leads to the following procedure.
\begin{enumerate}
\item Pick $\mu_0$ that is supported on the target set that is required to be stabilized.

\item Choose a randomized control law $\gamma$ on the control space (for instance Wiener measure, or a constrained variant), generate the noising process $(X_t,U_t)$, and learn the averaged feedback law $u(t,x)$ from the regression problem.

\item If $\mu_T$ is difficult to sample from exactly, replace it by any convenient alternative measure $\tilde{\mu}_T$ such that $\tilde{\mu}_T$  is absolutely continuous with respect to $\mu_T$.

\item For the time-reversed flow, by Proposition \ref{prop:change-initial}, one may replace $\mu_T$ by $\tilde{\mu}_T$ without changing the reverse dynamics. Hence the terminal support of the reversed process is contained in $\operatorname{supp}\mu_0$. In particular, if $\mu_0 = \delta_{x_{\mathrm{eq}}}$, then the time-reversed system is transferred back to the equilibrium point $x_{\mathrm{eq}}$.
\end{enumerate}

\textbf{Model-free implementation:} We note that the randomized-control method developed here admits a model-free implementation, provided one has access to a simulator capable of generating trajectories of the time-reversed system under sampled control inputs. In particular, the training data required for the regression step can be obtained directly from simulated trajectory--control pairs, without requiring an explicit analytical description of the dynamics.

\textbf{General Noising:} Injecting Brownian motion through the control channels is only one way to noise the system. In general, one can take any sampling algorithm that explores the state-space and use the noised trajectories to learn a feedback control to denoise the system using flow matching. One possible approach to noise a fully actuated system, for instance, is to use path planners from the robotics literature. In the numerical simulation section, we use the RRT* algorithm \cite{karaman2011sampling} provide training trajectories to learn the feedback control that navigates around obstacles. See section \ref{sec:rrtstar} for a demonstration of this idea.

\subsection{Flow Matching on PMP extremals}

This subsection develops a PMP-based approach to flow matching. 
Instead of relying on generic interpolation curves, we construct trajectories directly from solutions of an optimal control problem. 
Pontryagin’s Maximum Principle (PMP) provides first-order necessary conditions for optimality by coupling the state dynamics with an adjoint (costate) system, yielding extremals that encode the structure of optimal trajectories for a given cost functional.

By sampling over different initial costates, one obtains a family of PMP extremals that can serve as interpolation curves for flow matching. This connects the regression-based construction of vector fields with variational principles from optimal control, ensuring that the induced flows inherit optimality properties under suitable assumptions.  

This viewpoint is particularly useful for stabilization problems. That is, through time-reversal, PMP extremals can be repurposed to generate trajectories that optimally steer initial states toward a target set. In what follows, we formalize this construction by introducing flow matching based on PMP extremals. We consider two different settings. One using PMP systems for fixed end-point problems, and another for variable end-point problems with a terminal cost.

\begin{figure}[t]
    \centering
    
    \includegraphics[width=0.9\linewidth]{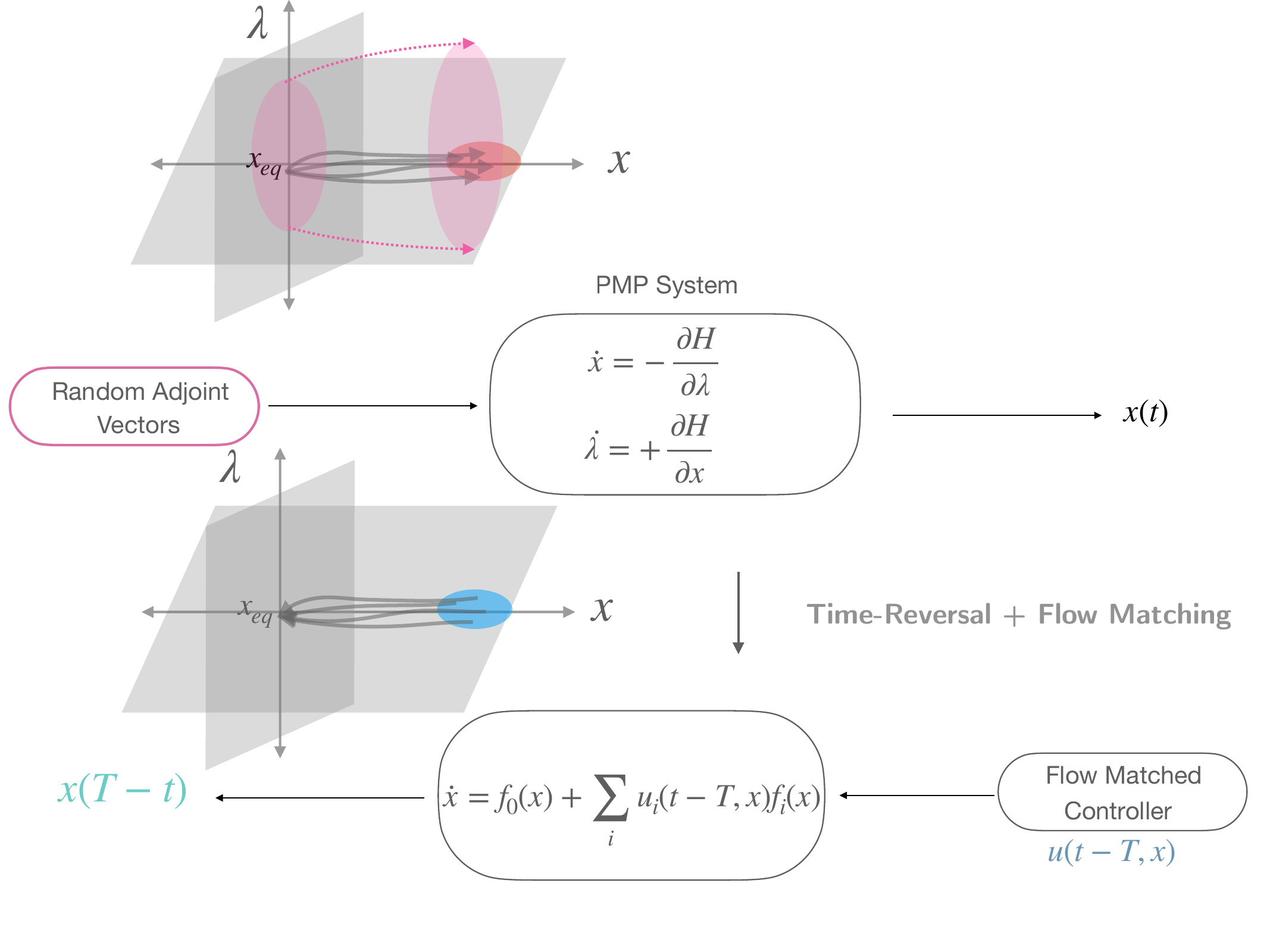}
    
    \caption{
        \textbf{PMP based Flow Matching.}
        In this formulation, random adjoint vectors $p$ are initialized for the time-reversed Pontryagin system to generate noisy dynamics.  The red phase depicts stochastic adjoint perturbations (noising), 
        while the blue phase represents controlled stabilization (denoising)
        toward the equilibrium $x_{\mathrm{eq}.}$
    }
    \label{fig:flow_matching_schematic2}
\end{figure}

\subsection{Fixed End-point Problems}

Let $L: \mathbb{R}^d \times \mathbb{R}^m \rightarrow \mathbb{R}$ be a smooth cost function, and consider the Pontryagin system for the time-reversed control dynamics:

\begin{align}
\label{eq:pmp}
\dot{\omega}(t) &= -f(\omega(t), \alpha(\omega(t), p(t))) \\
\dot{p}(t) &= \langle \nabla_x f(\omega(t), \alpha(\omega(t), p(t))), p(t) \rangle + \nabla_x L(\omega(t), \alpha(\omega(t), p(t))) \nonumber \\
\omega(0) &= x_0 \nonumber \\
p(0) &= p_0 \nonumber \\
\alpha(\omega, p) &= \arg\min_{\alpha \in \mathbb{R}^m} \left[-\langle p, f(\omega, \alpha) \rangle + L(\omega, \alpha)\right] \nonumber
\end{align}

It is well known that this system is the time-reversed Hamiltonian system \cite{cannarsa2008semiconcavity}, that provides necessary condition of optimality for the optimal control problem.

\begin{align}
\label{eq:nonlinear-opt}
\inf_{x,u} &\int_I L(x(t),u(t)) \, dt \\
\text{subject to } &\dot{x}(t) = f(x,u) \nonumber \\
&x(0) = \omega(T) , \quad x(T) = x_0 \nonumber
\end{align}

Our goal is to use the PMP system to noise the control system onto the reachable space. By varying the initial costate, we generate a family of PMP extremals that serve as interpolation curves for flow matching. 
In this way, regression-based learning of vector fields is linked to variational principles of optimal control, and the resulting flows can inherit optimality properties under suitable assumptions. 
The practical implementation of this procedure is summarized in Algorithm~\ref{alg:pmp-flow} and a conceptual visualization of the idea is presented in Figure \ref{fig:flow_matching_schematic2}.

\begin{algorithm}
\caption{Flow Matching Along Pontryagin System through Time Reversal (Fixed-end Point Problems)}
\label{alg:pmp-flow}
\begin{algorithmic}[1]
\State Fix compact target set $\Omega \subseteq \Rd$. Sample initial condition $x  \sim \mu_0$, where $\mu_0$ is supported on $\Omega$. 
\State Sample adjoint vector $p_0 \sim \mu_p$ where $\mu_p \in \mathcal{P}(\mathbb{R}^d)$.
\State Define the interpolation path $X_t$ and corresponding control $U_t$ using the map $S: \mathbb{R}^d \times \Rd \rightarrow \Omega$ defined by solving \eqref{eq:pmp}.
\State Learn control law $u: I \times \mathbb{R}^d \rightarrow \mathbb{R}^m$ by solving the regression problem:
\[\min_{u} \int_I \mathbb{E}\left[\|U_t - u(t, X_t)\|^2\right] dt\]
\State Sample new data via learned flow. Sample $Z_T \sim \mu_T$ for some $\mu_T \in \mathcal{P}(\mathbb{R}^d)$ and solve the ODE in reverse:
\[\dot{Z}_t = f(Z_t, u(t, Z_t)) = f_0(Z_t) + \sum_{i=1}^m u_i(t, Z_t) f_i(Z_t)\]
or equivalently, sample $\tilde{Z}_0 \sim \mu_0$ and solve forward:
\[\dot{\tilde{Z}}_t = -f(\tilde{Z}_t, u(T-t, \tilde{Z}_t)) = -f_0(\tilde{Z}_t) - \sum_{i=1}^m u_i(T-t, \tilde{Z}_t) f_i(\tilde{Z}_t)\]
\State \textbf{Output:} $Z_0$ (from reverse integration) or $\tilde{Z}_T$ (from forward integration)
\end{algorithmic}
\end{algorithm}

To elaborate on the algorithm, we define a family of interpolants as follows. Define $S: \Rd \times \mathbb{R}^d \rightarrow \Omega$ by $S(x_0,p_0) = (\omega, u)$ where $(\omega, p)$ solves the Pontryagin system \eqref{eq:pmp} with initial conditions $\omega(0) = x_0$ and $p(0) = p_0$.

Using these equations we define the forward process in the following way. Let $X_0 \sim \mu_0$ and $P_0 \sim \mu_p$ where $\mu_0$ is the distribution of initial conditions and $\mu_p \in \mathcal{P}(\mathbb{R}^d)$ is a distribution over initial adjoint vectors. Define the stochastic processes $(X_t, U_t) := (S^\omega_t(X_0,P_0), S^u_t(X_0,P_0))$ for $t \in I$. For this process we want to construct the averaged process. The idea is that though we don't know $\mu_t$ exactly for any $t \in (0,T)$, $\mu_t$ is supported on the reachable set. Later we will show how this can be combined with time-reversals to stabilize a given target set $\Omega$.

In order to be able to construct the averaged forward process corresponding to this PMP based interpolation we need a number of assumptions. We start with the following regularity assumption.

\begin{assumption}
\label{asmp:pmp-control}
The map $\mathbb{R}^d \times \mathbb{R}^d \ni (\omega, p) \mapsto \alpha(\omega,p) \in \mathbb{R}^m$ defined by
\[\alpha(\omega,p) := \arg\min_{\alpha \in \mathbb{R}^m} \left[\langle p, f(\omega,\alpha) \rangle + L(\omega,\alpha)\right]\]
is uniquely defined and locally Lipschitz continuous. Moreover, there exists a constant $M > 0$ such that 
\[|\alpha(\omega,p)| \leq M(1 + |\omega| + |p|)\]
for all $(\omega, p) \in \mathbb{R}^d \times \mathbb{R}^d$.
\end{assumption}

 Proposition \ref{prop:relaass} shows this Assumption to be true under the additional assumption that $L$ is strongly convex in 
$u$, uniformly in $x$, the control vector fields $f_i(x)$ are globally bounded, and the following assumptions on the cost function.

\begin{assumption}
\label{asmp:cost}
The cost function $L: \mathbb{R}^d \times \mathbb{R}^m \rightarrow \mathbb{R}$ satisfies:
\begin{enumerate}
\item There exists $\theta > 0$ such that $L(x,u) \geq \theta |u|^2$ for all $(x,u) \in \mathbb{R}^d \times \mathbb{R}^m$.
\item $L \in C^2(\mathbb{R}^d \times \mathbb{R}^m)$, and $\nabla_x L(x, u)$ has uniform linear growth in $x$: there exists $C > 0$ such that
\[|\nabla_x L(x, u)| \leq C(1 + |x|)\]
for all $(x, u) \in \mathbb{R}^d \times \mathbb{R}^m$.
\end{enumerate}
\end{assumption}
Given these assumptions we can state the following well-posedness result associtated with the PMP system.
\begin{proposition}
\label{prop:pmp-existence}
Suppose the control vector fields $f_i$ are uniformly bounded over $\mathbb{R}^d$ for $i = 1,\ldots,m$, and the drift vector field $f_0$ satisfies the linear growth condition of Assumption \ref{asmp:sublin}. Under Assumptions \ref{asmp:pmp-control} and \ref{asmp:cost}, for each $(x,p_0) \in \mathbb{R}^d \times \mathbb{R}^d$ and $T > 0$, there exists a unique solution $(\omega,p) \in H^1([0,T]; \mathbb{R}^d \times \mathbb{R}^d)$ of the Pontryagin system \eqref{eq:pmp}.

Moreover, there exists a constant $C > 0$ such that 
\[\int_I |\alpha(\omega(t),p(t))| \, dt \leq C(1 + |x| + |p_0|).\]
\end{proposition}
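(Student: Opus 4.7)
The plan is classical: I would prove local existence and uniqueness via Picard--Lindel\"of, extend the solution to all of $[0,T]$ via a priori energy bounds, and then read off the $L^1$ bound on $\alpha$ from the resulting pointwise estimate on $(\omega, p)$. For the local step, the first observation is that the right-hand side of~\eqref{eq:pmp} is locally Lipschitz in $(\omega, p)$: the feedback $\alpha(\omega, p)$ is locally Lipschitz by Assumption~\ref{asmp:pmp-control}, the vector fields $f_i$ and their Jacobians $\nabla_x f_i$ are locally Lipschitz because $f_i \in C^2$, and $\nabla_x L$ is locally Lipschitz by Assumption~\ref{asmp:cost}. Since local Lipschitzness is preserved under sums, products, and composition, Picard--Lindel\"of yields a unique maximal solution on some interval $[0, T_{\max})$.

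The central step is the a priori bound ensuring $T_{\max} \geq T$. I would work with the energy
\[
y(t) := 1 + |\omega(t)|^2 + |p(t)|^2
\]
and aim for a differential inequality $\dot y \leq C_1 y + C_2$ that can be closed by Gronwall. The bound on $\dot \omega$ is routine: the linear growth of $\alpha$, the sublinear growth of $f_0$, and the uniform boundedness of $f_1, \dots, f_m$ combine to give $|f(\omega, \alpha)| \leq C(1 + |\omega| + |p|)$, and the growth hypothesis on $\nabla_x L$ controls its contribution to the $p$-equation. The step I expect to be the main obstacle is the quadratic term $\langle \nabla_x f(\omega, \alpha)\, p,\, p\rangle$ in the costate equation: $f_i \in C^2$ yields only local bounds on $\nabla_x f_i$, so closing Gronwall requires either a supplementary uniform bound on $\|\nabla_x f_i\|_{\mathrm{op}}$ (implicit in most standard PMP setups and consistent with the uniform boundedness of $f_1, \dots, f_m$) or an exploitation of the underlying Hamiltonian-type structure of~\eqref{eq:pmp} together with the coercivity $L \geq \theta |\alpha|^2$ from Assumption~\ref{asmp:cost} to bound $|p|$ indirectly via a nearly-conserved energy. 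Either route delivers, through Gronwall, $y(t) \leq C(1 + |x|^2 + |p_0|^2)\,e^{CT}$, which rules out blow-up on $[0,T]$.

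Finally, the pointwise bound $|\omega(t)| + |p(t)| \leq C(1 + |x| + |p_0|)$ obtained above, together with the linear-growth inequality $|\alpha(\omega, p)| \leq M(1 + |\omega| + |p|)$ from Assumption~\ref{asmp:pmp-control}, gives
\[
|\alpha(\omega(t), p(t))| \leq C'(1 + |x| + |p_0|) \quad \text{for a.e. } t \in [0,T],
\]
and integrating over $I$ yields the claimed $L^1$ bound with a constant depending only on the data and $T$. The $H^1$ regularity of $(\omega, p)$ on $[0,T]$ is then automatic, since both sides of~\eqref{eq:pmp} are uniformly bounded in $t$ under these estimates and therefore in $L^2(I)$.
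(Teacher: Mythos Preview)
Your proposal follows exactly the same strategy as the paper's proof: verify local Lipschitz continuity of the PMP vector field, invoke linear growth to obtain a Gronwall-type exponential bound $|\omega(t)|+|p(t)|\le M_1 e^{M_2 t}(|x|+|p_0|)$, and then integrate the linear-growth bound on $\alpha$ from Assumption~\ref{asmp:pmp-control} to get the $L^1$ estimate. The paper's argument is in fact terser than yours---it simply asserts that ``the vector field associated with the Pontryagin system has linear growth and is locally Lipschitz'' and proceeds directly to the exponential bound---and in particular glosses over precisely the term $\langle\nabla_x f(\omega,\alpha),\,p\rangle$ that you flagged as requiring either a supplementary uniform bound on $\|\nabla_x f_i\|_{\mathrm{op}}$ or some additional structure; your identification of that subtlety is accurate, and the paper does not resolve it beyond what you already wrote.
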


\begin{proof}
Under the given assumptions, the vector field associated with the Pontryagin system \eqref{eq:pmp} has linear growth and is locally Lipschitz. This ensures existence and uniqueness of solutions.

From the linear growth condition, there exist constants $M_1, M_2 > 0$ such that 
\[|\omega(t)| + |p(t)| \leq M_1 e^{M_2 t}(|x| + |p_0|).\]

The bound on the $L^1$ norm of the control follows from this estimate and the linear growth of $\alpha$ in Assumption \ref{asmp:pmp-control}:
\[\int_I |\alpha(\omega(t),p(t))| \, dt \leq M \int_I (1 + |\omega(t)| + |p(t)|) \, dt \leq C(1 + |x| + |p_0|).\]
\end{proof}


The above result establishes existence and uniqueness of solutions to the Pontryagin system. We now show that by sampling different initial conditions and initial adjoint vectors, we can construct a flow matching scheme where all interpolating trajectories are extremals of the optimal control problem.

\begin{theorem}
\label{thm:pmp-flow}
Let $\mu_0,\mu_p \in \mathcal{P}_1(\Rd)$ with $\mu_0$ compactly supported. Let $\gamma \in \mathcal{P}_1(\Rd \times \Rd)$ be given by $\gamma = \mu_0 \otimes \mu_p$. Suppose the control vector fields $f_i$ are uniformly bounded over $\mathbb{R}^d$ for $i = 1,\ldots,m$, and the drift vector field $f_0$ satisfies the linear growth condition of Assumption \ref{asmp:sublin}. Under Assumptions \ref{asmp:pmp-control} and \ref{asmp:cost}, define $S: \mathbb{R}^d \Rd \rightarrow \Omega$ by  
\[S(x_0,p_0) = (\omega,u)\] 
for all $x_0 , p_0 \in \mathbb{R}^d$, where $(\omega,p)$ solves the Pontryagin system \eqref{eq:pmp} with initial conditions $\omega(0) = x$, $p(0) = p_0$, and $u(t) = \alpha(\omega(t),p(t))$.

Then the hypotheses of Theorem \ref{thm:realization} are satisfied. Specifically, define $\mu_t := (S^{\omega}_t)_{\#}\gamma$ for $t \in I$. Then $\{\mu_t\}_{t \in I}$ solves the continuity equation \eqref{eq:continuity} with feedback control
\[u(t,x) = \int_{U} u \, d\eta_{t,x}(u)\]
where $\eta_{t,x} \in \mathcal{P}(U)$ is the conditional distribution of $U_t = S^u_t(P_0)$ given $X_t = x$, with $P_0 \sim \gamma$.

Moreover, there exists a probability measure $\mathbb{P} \in \mathcal{P}(\mathbb{R}^d \times \Gamma)$ such that 
\begin{equation}
\label{eq:reverse-flow}
\dot{\omega}(t) = -f(\omega(t), u(t,\omega(t))), \quad \omega(0) = y
\end{equation}
for $\mathbb{P}$-almost every $(y,\omega) \in \mathbb{R}^d \times \Gamma$, and $(e_t)_{\#}\mathbb{P} = \mu_t$ for all $t \in I$.
\end{theorem}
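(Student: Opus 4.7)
The plan is to verify the hypotheses of Theorem \ref{thm:realization} with latent space $M = \mathbb{R}^d \times \mathbb{R}^d$, measure $\gamma = \mu_0 \otimes \mu_p$, and interpolation map $S(x_0, p_0) = (\omega, u)$ obtained from the Pontryagin system \eqref{eq:pmp}. A preliminary observation is that the state equation in \eqref{eq:pmp} reads $\dot\omega = -f(\omega, \alpha(\omega, p))$, so the pair $(S^\omega, S^u)$ with $S^u_t = \alpha(\omega(t), p(t))$ is admissible not for the original system but for the time-reversed control-affine system with vector fields $\tilde f_0 := -f_0$ and $\tilde f_i := -f_i$. I would therefore apply Theorem \ref{thm:realization} to this reversed system; its conclusions immediately deliver both the continuity equation with drift $-f(\cdot,u(t,\cdot))$ and a realizing measure $\mathbb{P}$ concentrated on integral curves of \eqref{eq:reverse-flow}, as required.

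Only two hypotheses of Theorem \ref{thm:realization} require checking. For \emph{measurability} of $S$, Assumptions \ref{asmp:pmp-control} and \ref{asmp:cost} make the right-hand side of \eqref{eq:pmp} locally Lipschitz with linear growth, and Proposition \ref{prop:pmp-existence} gives global-in-time existence and uniqueness for every initial pair $(x_0, p_0)$. Continuous dependence on initial data then renders $(x_0, p_0) \mapsto (\omega, p)$ continuous, and post-composition with $\alpha$ and with the canonical projection onto $C(I;\mathbb{R}^d) \times L^2(I;\mathbb{R}^m)$ preserves Borel measurability, so $S: \mathbb{R}^{2d} \to \Omega$ is Borel measurable everywhere on $\mathbb{R}^{2d}$.

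For the \emph{integrability} condition, Proposition \ref{prop:pmp-existence} yields $\int_I |\alpha(\omega(t),p(t))|\,dt \leq C(1 + |x_0| + |p_0|)$, and integrating against $\gamma = \mu_0 \otimes \mu_p$ gives
\begin{equation*}
\int_{\mathbb{R}^{2d}} \int_I |S^u_t(x_0, p_0)| \, dt \, d\gamma(x_0, p_0) \leq C\!\left(1 + \int |x_0| \, d\mu_0 + \int |p_0| \, d\mu_p\right) < \infty,
\end{equation*}
since $\mu_0$ is compactly supported and $\mu_p \in \mathcal{P}_1(\mathbb{R}^d)$. Everything else in the statement is then an immediate specialization of Theorem \ref{thm:realization} applied to the reversed vector fields: the averaged feedback $u(t,x) = \int_U u \, d\eta_{t,x}(u)$ arises from disintegration of $\eta$, the continuity equation for $\mu_t = (S^\omega_t)_\# \gamma$ follows from Theorem \ref{thm:interpogen}, and the realizing measure $\mathbb{P}$ on $\mathbb{R}^d \times \Gamma$ satisfying \eqref{eq:reverse-flow} is produced by the superposition principle. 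The only genuine obstacle is Proposition \ref{prop:pmp-existence} itself, in which the adjoint equation's linear dependence on $p$ must be tamed by a Gr\"onwall estimate to rule out finite-time blow-up; since this is already established, the present theorem reduces to an assembly of prior results under the ansatz that PMP extremals serve as the interpolation family, without requiring any Lipschitz regularity of the averaged feedback $u$.
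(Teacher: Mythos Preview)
Your proposal is correct and matches the paper's intended argument: the paper itself provides no proof for this theorem, simply asserting that ``the hypotheses of Theorem~\ref{thm:realization} are satisfied,'' and you have correctly supplied the two verifications (measurability of $S$ via continuous dependence from Proposition~\ref{prop:pmp-existence}, and the $L^1$ control bound integrated against $\gamma = \mu_0 \otimes \mu_p$). Your observation that the PMP system \eqref{eq:pmp} has $\dot\omega = -f$ and hence one must apply Theorem~\ref{thm:realization} to the reversed vector fields $\tilde f_i = -f_i$ is a genuine subtlety that the paper glosses over; this is precisely what makes the conclusion \eqref{eq:reverse-flow} come out with the correct sign.
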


Despite the fact that the forward process is constructed from the PMP system, in general it might not be the case that forward process is optimal. However, there is a special case for which we can establish optimality.

Given this definition we prove the following sufficient condition for optimality of the constructed controller.

\begin{proposition}[Linear Convex Control]
\label{prop:linear-convex}
Assume the hypotheses of Theorem \ref{thm:pmp-flow} and $\mu_0 = \delta_{x_0}$ for some $x_0 \in \Rd$. Let $f(x,u) = Ax + Bu$ be a globally controllable linear system in the sense of Definition \ref{def:controllable}, for some matrices $A \in \mathbb{R}^{d \times d}$ and $B \in \mathbb{R}^{d \times m}$. Additionally assume $L$ is twice continuously differentiable and uniformly strongly convex with respect to the control variable: there exists $\lambda > 0$ such that 
\begin{align}
L(\alpha x + (1-\alpha)y, \alpha u + (1-\alpha)v) \leq \alpha L(x,u) + (1-\alpha)L(y,v) - \frac{\alpha(1-\alpha)\lambda}{2}|u-v|^2
\end{align}
for all $x,y \in \mathbb{R}^d$, all $u,v \in \mathbb{R}^m$ and all $\alpha \in (0,1)$. Lastly, assume that $B$ is full rank. 

Then the measure $\mathbb{P}$ guaranteed to exist in Theorem \ref{thm:pmp-flow} is unique and concentrated on solutions of the optimal control problem:
\begin{align}
\label{eq:linear-opt}
\inf_{x,u} &\int_I L(x(t),u(t)) \, dt \\
\text{subject to } &\dot{x}(t) = -Ax(t) - Bu(t) \nonumber \\
&x(0) = x_0, \quad x(T) = \omega(T) \nonumber
\end{align}
where $\omega$ is the solution of the Pontryagin system \eqref{eq:pmp}.
\end{proposition}

\begin{proof}
Under the assumptions, solutions of the Pontryagin system are locally optimal solutions of the optimal control problem \citep[24.2 Corollary]{clarke2013functional}. Since the problem is convex, the control is also globally optimal. Moreover, partial strong convexity of the cost function and linearity of the dynamics ensure that optimal trajectories and controls $(x^*, u^*) \in \Omega$ are unique due to strict convexity of the map $(x,u) \mapsto \int_I L(x(t),u(t)) \, dt$ under the constraints.

By uniqueness, optimal solutions $x^*_1, x^*_2 \in \Gamma$ for different terminal conditions $x^*_1(T) \neq x^*_2(T)$ cannot coincide at any intermediate time $t \in (0,T)$. This implies $\eta_{t,x} = \delta_{u(t,x)}$ is a Dirac measure on $\mathbb{R}^m$ for all $t \in (0,T)$ and some Borel function $u: I \times \mathbb{R}^d \rightarrow \mathbb{R}^m$. Therefore, $\mathbb{P} = (S^{\omega})_{\#}\gamma$.

To prove uniqueness of $\mathbb{P}$, suppose $\hat{\mathbb{P}}$ is another probability measure on $\Gamma$ concentrated on solutions of
\begin{equation}
\label{eq:reverse-ode}
\dot{\omega}(t) = -f(\omega(t), u(t,\omega(t))), \quad \omega(T) = y.
\end{equation}

Let $V: (0,T) \times \mathbb{R}^d \rightarrow \mathbb{R}$ be the value function. Computing:
\[\frac{d}{dt}V(t,\omega(t)) = \partial_t V(t,\omega(t)) + \langle \nabla_x V(t,\omega(t)), \dot{\omega}(t) \rangle = 0.\]

Since $L$ has positive definite Hessian with respect to $u$ (by strong convexity), the optimal control is continuous \citep[Chapter III, Corollary 6.1]{fleming2012deterministic}. The value function satisfies the Hamilton-Jacobi-Bellman PDE:
\[\partial_t V + H(x, \nabla_x V) = 0\]
where $H(x,p) = \min_{u \in \mathbb{R}^m} [-\langle p, f(x,u) \rangle + L(x,u)]$.

This implies
\[\frac{d}{dt}V(t,\omega(t)) = -L(\omega(t), u(t,\omega(t)))\]

Therefore, for any $\varepsilon \in (0,T)$:
\[V(T,\omega(T)) = V(\varepsilon,\omega(\varepsilon)) - \int_\varepsilon^T L(\omega(\tau), u(\tau,\omega(\tau))) \, d\tau.\]

Hence $(\omega, u(\cdot,\omega(\cdot)))$ restricted to $[\varepsilon,T]$ is optimal for the problem with initial condition $\omega(\varepsilon)$ and terminal condition $\omega(T)$. This implies $(\omega(t), p(t))$ satisfy the Pontryagin system \eqref{eq:pmp} for some $p_\varepsilon \in \mathbb{R}^d$.

By the growth assumption (Assumption \ref{asmp:pmp-control}), solutions are global. Therefore, $(\omega,u)$ are optimal solutions of \eqref{eq:linear-opt} over the entire interval $[0,T]$.

Given two solutions $\omega_1(t)$ and $\omega_2(t)$ of \eqref{eq:reverse-ode} with $\omega_1(T) = \omega_2(T) = y$, both satisfy the Pontryagin system with respective adjoints $p_1, p_2$. Since $B$ is full rank and the controls $u_i(t) = \arg\min_{\alpha} [-\langle p_i, f(x,\alpha) \rangle + L(x,\alpha)]$ are continuous, we have $p_1(T) = p_2(T)$. By uniqueness of solutions to the Pontryagin system (Assumptions \ref{asmp:sublin}, \ref{asmp:pmp-control}, \ref{asmp:cost}), we conclude $\omega_1 = \omega_2$.

This implies that solutions of \eqref{eq:reverse-ode} are unique for any $y \in \mathbb{R}^d$ over the time interval $[0,T]$. Next, we use this to show that the measures $\hat{\mathbb{P}}$ and $\mathbb{P}$ coincide. 

Let $e_T: \Gamma \rightarrow \mathbb{R}^d$ be the evaluation map defined by $e_T(\omega) = \omega(T)$. The measures $\mathbb{P}$ and $\hat{\mathbb{P}}$ admit disintegrations with respect to their common pushforward $(e_T)_{\#}\mathbb{P} = (e_T)_{\#}\hat{\mathbb{P}} = \mu_T$:
\[\int_{\Gamma} f(\omega) \, d\mathbb{P}(\omega) = \int_{\mathbb{R}^d} \int_{\Gamma} f(\omega) \, d\mathbb{P}_y(\omega) \, d\mu_T(y)\]
\[\int_{\Gamma} f(\omega) \, d\hat{\mathbb{P}}(\omega) = \int_{\mathbb{R}^d} \int_{\Gamma} f(\omega) \, d\hat{\mathbb{P}}_y(\omega) \, d\mu_T(y)\]
for all $f \in C_b(\Gamma)$. 

Since solutions of \eqref{eq:reverse-ode} with terminal condition $\omega(T) = y$ are unique for every $y \in \mathbb{R}^d$, both $\mathbb{P}_y$ and $\hat{\mathbb{P}}_y$ are Dirac measures concentrated on the unique trajectory reaching $y$. Therefore $\mathbb{P}_y = \hat{\mathbb{P}}_y$ for $\mu_T$-almost every $y$. Hence, the integrals with respect to any $f \in C_b(\Gamma)$ coincide, proving $\mathbb{P} = \hat{\mathbb{P}}$.
\end{proof}

Having established uniqueness of the measure $\mathbb{P}$ concentrated on optimal trajectories, we now investigate conditions under which the constructed interpolation map $S$ is regular, ensuring that the resulting flow matching scheme produces well-defined probability densities. This regularity result will be useful later to define time-reversals. Toward this end we define the following well known object in optimal control theory.

\begin{definition}[Exponential Map]
\label{def:exp-map}
The exponential map $\exp_x: I \times \mathbb{R}^d \rightarrow \mathbb{R}^d$ is defined by 
\[\exp_x(t,p_0) = \omega(t),\]
where $(\omega,p)$ is the solution of the time-reversed Pontryagin system \eqref{eq:pmp} with initial conditions $\omega(0) = x$ and $p(0) = p_0$.
\end{definition}

The regularity of the interpolation map is crucial for ensuring that the flow matching scheme produces absolutely continuous measures. We now introduce several geometric concepts that will help us characterize when the Pontryagin extremals provide a regular parametrization of the state space.

For systems with drift-free dynamics, controllability can be characterized using Lie algebraic conditions. We introduce the necessary machinery to state the Hörmander condition.

Let $\mathcal{V} = \{f_1, \ldots, f_m\}$ denote the control vector fields. The Lie bracket of two vector fields $f, g: \mathbb{R}^d \rightarrow \mathbb{R}^d$ is defined by
\begin{equation}
[f,g]_i = \sum_{j=1}^d \left(f^j \partial_{x_j} g^i - g^j \partial_{x_j} f^i\right),
\end{equation}
where $\partial_{x_j}$ denotes the partial derivative with respect to coordinate $j$.

We recursively define the Lie algebra generated by $\mathcal{V}$. Set $\mathcal{V}^0 = \mathcal{V}$. For each $k \in \mathbb{Z}_+$, define
\[\mathcal{V}^k = \{[g, h] : g \in \mathcal{V}, h \in \mathcal{V}^{j}, j = 0, \ldots, k-1\}.\]

The system satisfies the Hörmander condition (also known as the Chow-Rashevskii condition \cite{agrachev2019comprehensive}) if the Lie algebra generated by $\mathcal{V}$ spans the tangent space at every point.

\begin{assumption}[Hörmander Condition]
\label{asmp:hormander}
Suppose $f_0 \equiv 0$ and $f_i \in C^{\infty}(\mathbb{R}^d; \mathbb{R}^d)$ for each $i = 1, \ldots, m$. The Lie algebra generated by $\mathcal{V}$ has full rank: there exists $r \in \mathbb{N}$ such that
\[\mathrm{span}\{(x) : g \in \cup_{k=0}^r \mathcal{V}^k\} = \mathbb{R}^d\]
for all $x \in \mathbb{R}^d$.
\end{assumption}
The next result tells us how the support of the measures can be characterized explicitly when the system is globally controllable.
\begin{proposition}
Assume the hypothesis of Theorem \ref{thm:pmp-flow} and that support of $\mu_p$ is all of $\Rd$.  Let $L(x,u) = |u|^2$. Suppose either one of the following is true,
\begin{enumerate}
\item The system is driftless, satisfying Assumption \ref{asmp:hormander}.
\item The system is a controllable linear time invariant system.
\end{enumerate}
Then the support of $\mu_t$ is all of $\Rd$ for all $t > 0$.
\label{prop:supppmp}
\end{proposition}
\begin{proof}
For driftless systems, it is known that the range of the exponential map is dense in the state space $\Rd$ from \cite[Theorem 1]{rifford2005morse}. This along with characterization of supports of continuous maps established in Proposition \ref{prop:supp} establishes the required result.
For linear systems, it is know that they do not admit singular curves \cite[Proposition 2.4]{cannarsa2008semiconcavity}. Hence, the exponential map is in fact, all of $\Rd$.  Then the statement follows, once again using Proposition \ref{prop:supp}.
\end{proof}

For the case of linear systems, from the statement of \cite[Proposition 2.4]{cannarsa2008semiconcavity} one can see that the previous result also extends for more general kinds of costs. However, we do not state it here, it avoid stating all the additional assumptions on the costs from \cite{cannarsa2008semiconcavity}.
The usefulness of this result is that, since the exponential map propogates mass throughout the state space, one can reverse the process using Proposition \ref{prop:change-initial}. However, time-reversal is challenging due to the potential lack of regularity of the measures.
Towards, this end, to enable application of time-reversal result of Proposition \ref{prop:time-reversal}, now state the main regularity result, which ensures that the flow matching scheme based on Pontryagin extremals produces absolutely continuous measures.

\begin{proposition}[Absolute Continuity]
\label{prop:absolute-continuity}
Assume the hypothesis of Theorem \ref{thm:pmp-flow} and Proposition \ref{prop:supppmp}. Additionally,  we assume Assumption \ref{asmp:nonsingular} holds true. Let $L(x,u) = |u|^2$.  Then the family $\mu_t := (S^{\omega}_t)_{\#}\gamma$ from Theorem \ref{thm:pmp-flow} is absolutely continuous with respect to Lebesgue measure for all $t \in (0,T]$.
\end{proposition}

\begin{proof}
We first establish that the set of initial adjoint vectors $p_0 \in \mathbb{R}^d$ for which the exponential map $\exp_x(t, \cdot)$ has non-degenerate Jacobian has full Lebesgue measure. 

Let $\mathbb{S}^{d-1} \subset \mathbb{R}^d$ denote the unit sphere in $d$ dimensions, and let $\rho \in \mathcal{P}(\mathbb{S}^{d-1})$ be the uniform probability measure on $\mathbb{S}^{d-1}$. The $d$-dimensional Lebesgue measure $\lambda_d$ on $\mathbb{R}^d$ can be expressed in polar coordinates as:
\[d\lambda_d(p) = r^{d-1} \, dr \, d\rho(\theta), \quad \text{where } p = r\theta, \quad r \in [0, \infty), \quad \theta \in \mathbb{S}^{d-1}.\]

For any integrable function $f: \mathbb{R}^d \to \mathbb{R}$,
\[\int_{\mathbb{R}^d} f(p) \, d\lambda_d(p) = \int_{\mathbb{S}^{d-1}} \int_0^\infty f(r\theta) \, r^{d-1} \, dr \, d\rho(\theta).\]

To prove absolute continuity, it suffices to show that for each direction $\theta \in \mathbb{S}^{d-1}$, the set of radii $r > 0$ where the map $r \mapsto \exp_x(T, r\theta)$ has degenerate differential is at most countable. Under the Hörmander condition (Assumption \ref{asmp:hormander}) and the non-singularity assumption (Assumption \ref{asmp:nonsingular}), this follows from \cite[Corollary 8.51]{agrachev2019comprehensive}, which states that conjugate points along extremal trajectories form a discrete set.

Since the set of singular points has measure zero in each radial direction, the set of $p_0 \in \mathbb{R}^d$ where $D_{p_0}\exp_x(T, p_0)$ is singular has Lebesgue measure zero. By \cite[Theorem 9.2.2]{bogachev2010differentiable}, if $\gamma$ is absolutely continuous with respect to Lebesgue measure and the map $p_0 \mapsto \exp_x(T, p_0)$ has non-degenerate Jacobian $\gamma$-almost everywhere, and $ ( \exp_x( T, \cdot)_{\#}\mu_p$ is absolutely continuous.
Now, 
\begin{equation}
\label{eq:kern}
\mu_T(A) = \int_{\mathbb{R}^d} (\exp_x(T, \cdot))_{\#}\mu_p(A) \, d\mu_0(x)
\end{equation}
for all Borel measurable sets $A \subseteq \Rd$.
It is clear from this expression that $\exp_x( T, \cdot)_{\#}\mu_p$ being absolutely continuous, gives absolute continuity of $\mu_T$ due to this expression.

The same argument applies for any $t \in (0,T]$, establishing absolute continuity of $\mu_t$ for all positive times. 
\end{proof}

Given these results, we can justify the time reversal algorithm and establish absolute continuity of the flow matching scheme as follows.

From Proposition \ref{prop:absolute-continuity}, we know that $\mu_T = (S^\omega_T)_{\#}\gamma$ is absolutely continuous with respect to Lebesgue measure when $\mu_p$ is absolutely continuous. Moreover, when $\mu_p$ has full support, so does $\mu_T$.

Next, the key insight is that this absolute continuity allows us to change from $\mu_T$ to another absolutely continuous measure $\tilde{\mu}_T$ that is easy to sample from (such as a Gaussian distribution), and Proposition \ref{prop:change-initial} ensures that the behavior of the resulting dynamics remains the same. 

The construction of flows for this case immediately follows.

\begin{enumerate}
\item Pick $\mu_0$ that is supported on a target set that is required to be stabilized.
\item Replace the potentially unknown measure $\mu_T$ with any convenient alternative absolutely continuous measure $\tilde{\mu}_T$ (e.g., $\mathcal{N}(0, I)$).
\item By Proposition \ref{prop:change-initial}, the trajectories under $\tilde{\mu}_T$ still solve the same control system dynamics.
\item By Proposition \ref{prop:change-initial}, the time-reversed system will be transferred to the initial state value for the PMP system. 
\end{enumerate}

This is particularly powerful for implementation in the sense that rather than needing to sample from the exact distribution $\mu_T$ (which depends on the adjoint dynamics and may be complex), we can sample from any convenient absolutely continuous distribution and still obtain valid flow matching trajectories. The dynamics automatically adjust through the disintegration to ensure correct transport and hence spreading over $\Omega$, though the nature of the distribution cannot be controlled.

\subsection{Variable End-Point Problems}

We wish to mimic the developments of the previous section to variable end point problems. In this case, the time-reversed PMP system we get is the following,

\begin{align}
\label{eq:pmpve}
\dot{\omega}(t) &= -f(\omega(t), \alpha(\omega(t), p(t))) \\
\dot{p}(t) &= \langle \nabla_x f(\omega(t), \alpha(\omega(t), p(t))), p(t) \rangle + \nabla_x L(\omega(t), \alpha(\omega(t), p(t))) \nonumber \\
\omega(0) &= x_0 \nonumber \\
p(0) &= \nabla \Psi (x_0) \nonumber  \\
\alpha(\omega, p) &= \arg\min_{\alpha \in \mathbb{R}^m} \left[-\langle p, f(\omega, \alpha) \rangle + L(\omega, \alpha)\right] \nonumber
\end{align}

This system corresponds to necessary conditions of optimality for the optimal control problem,
\begin{align}
\label{eq:nonlinear-opt}
\inf_{x,u} & \int_I L(x(t),u(t)) \, dt + \Psi (x(T))  \\
\text{subject to} ~ & \dot{x}(t) = f(x,u) \nonumber \\
& x(0) = \omega(T)  \nonumber
\end{align}
We will need an additional assumption due to the end-point cost function $\Psi$.
\begin{assumption}
\label{asmp:termcost}
The function $\Psi \in C^2(\mathbb{R}^d)$, and $\nabla_x \Psi $ has uniform linear growth: there exists $C > 0$ such that
\[|\nabla_x \Psi(x)| \leq C(1 + |x|)\]
for all $x \in \mathbb{R}^d$.
\end{assumption}

Given these assumptions we state the following results on existence of superpositions, and recovering of the solution to the optimal control problem for the linear convex case. Since the proofs follow nearly verbatim as in the proofs of the previous section, we skip the proofs, for all but the last result on the support of the constructed measures $\mu_t$. 

\begin{theorem}
\label{thm:pmp-flow2}
Let $\mu_0 \in \mathcal{P}_1(\Rd)$. Let $\gamma \in \mathcal{P}_1(\Rd )$ be given by $\gamma = \mu_0$. Suppose the control vector fields $f_i$ are uniformly bounded over $\mathbb{R}^d$ for $i = 1,\ldots,m$, and the drift vector field $f_0$ satisfies the linear growth condition of Assumption \ref{asmp:sublin}. Under Assumptions \ref{asmp:pmp-control},  \ref{asmp:cost} and \ref{asmp:termcost}, define $S: \mathbb{R}^d \Rd \rightarrow \Omega$ by  
\[S(x_0) = (\omega,u)\] 
for all $x_0 \in \mathbb{R}^d$, where $(\omega,p)$ solves the Pontryagin system \eqref{eq:pmpve} with initial conditions $\omega(0) = x$, and $u(t) = \alpha(\omega(t),p(t))$.

Then the hypotheses of Theorem \ref{thm:realization} are satisfied. Specifically, define $\mu_t := (S^{\omega}_t)_{\#}\gamma$ for $t \in I$. Then $\{\mu_t\}_{t \in I}$ solves the continuity equation \eqref{eq:continuity} with feedback control
\[u(t,x) = \int_{U} u \, d\eta_{t,x}(u)\]
where $\eta_{t,x} \in \mathcal{P}(U)$ is the conditional distribution of $U_t = S^u_t(P_0)$ given $X_t = x$, with $P_0 \sim \gamma$.

Moreover, there exists a probability measure $\mathbb{P} \in \mathcal{P}(\mathbb{R}^d \times \Gamma)$ such that 
\begin{equation}
\label{eq:reverse-flow}
\dot{\omega}(t) = -f(\omega(t), u(t,\omega(t))), \quad \omega(0) = y
\end{equation}
for $\mathbb{P}$-almost every $(y,\omega) \in \mathbb{R}^d \times \Gamma$, and $(e_t)_{\#}\mathbb{P} = \mu_t$ for all $t \in I$.
\end{theorem}

For linear problems with convex costs, once again, we can state a stronger result.

\begin{proposition}[Linear Convex Control]
\label{prop:linear-convex2}
Assume the hypotheses of Theorem \ref{thm:pmp-flow2} and $\mu_0 = \delta_{x_0}$ for some $x_0 \in \Rd$. Let $f(x,u) = Ax + Bu$ be a globally controllable linear system in the sense of Definition \ref{def:controllable}, for some matrices $A \in \mathbb{R}^{d \times d}$ and $B \in \mathbb{R}^{d \times m}$. Additionally assume $L$ is twice continuously differentiable and uniformly strongly convex with respect to the control variable: there exists $\lambda > 0$ such that 
\begin{align}
L(\alpha x + (1-\alpha)y, \alpha u + (1-\alpha)v) \leq \alpha L(x,u) + (1-\alpha)L(y,v) - \frac{\alpha(1-\alpha)\lambda}{2}|u-v|^2
\end{align}
for all $x,y \in \mathbb{R}^d$, all $u,v \in \mathbb{R}^m$ and all $\alpha \in (0,1)$. Similarly, assume that $\Psi$ is strictly convex. Lastly, assume that $B$ is full rank. 

Then the measure $\mathbb{P}$ guaranteed to exist in Theorem \ref{thm:pmp-flow2} is unique and concentrated on solutions of the optimal control problem:
\begin{align}
\label{eq:linear-opt}
\inf_{x,u} &\int_I L(x(t),u(t)) \, dt + \Psi(x(T)) \\
\text{subject to }  &\dot{x}(t) = -Ax(t) - Bu(t)  \\
&x(0) = x_0
\end{align}
\end{proposition}

\begin{algorithm}
\caption{Flow Matching Along Pontryagin System through Time Reversal (Variable-end Point Problems)}
\label{alg:pmp-flowve}
\begin{algorithmic}[1]
\State Sample initial condition $x  \sim \mu_0$. 
\State Define the interpolation path $X_t$ and corresponding control $U_t$ using the map $S: \mathbb{R}^d \rightarrow \Omega$ defined by solving \eqref{eq:pmpve}.
\State Learn control law $u: I \times \mathbb{R}^d \rightarrow \mathbb{R}^m$ by solving the regression problem:
\[\min_{u} \int_I \mathbb{E}\left[\|U_t - u(t, X_t)\|^2\right] dt\]
\State Sample new data via learned flow. Sample $Z_T \sim \mu_T$ for some $\mu_T \in \mathcal{P}(\mathbb{R}^d)$ and solve the ODE in reverse:
\[\dot{Z}_t = f(Z_t, u(t, Z_t)) = f_0(Z_t) + \sum_{i=1}^m u_i(t, Z_t) f_i(Z_t)\]
or equivalently, sample $\tilde{Z}_0 \sim \mu_0$ and solve forward:
\[\dot{\tilde{Z}}_t = -f(\tilde{Z}_t, u(T-t, \tilde{Z}_t)) = -f_0(\tilde{Z}_t) - \sum_{i=1}^m u_i(T-t, \tilde{Z}_t) f_i(\tilde{Z}_t)\]
\State \textbf{Output:} $Z_0$ (from reverse integration) or $\tilde{Z}_T$ (from forward integration)
\end{algorithmic}
\end{algorithm}

In the next proposition, we characterize the support of the measure $\mu_t$. Note that we don't need any assumption on the surjectivity of the end-point map.

\begin{proposition}
Assume the hypothesis of Theorem \ref{thm:pmp-flow2} and that support of $\mu_0$ is all of $\Rd$.  

Then the support of $\mu_t$ is all of $\Rd$ for all $t > 0$.
\end{proposition}
\begin{proof}
Given the assumptions, for every $x_0 \in \Rd$, there exists an optimal control for the optimal control problem \cite{cannarsa2008semiconcavity}[Theorem 7.4.5],
\begin{align}
\label{eq:nonlinear-optsec}
\inf_{x,u} & \int_I L(x(t),u(t)) \, dt + \Psi (x(T)) \\
\text{subject to} ~ & \dot{x}(t) = f(x,u) \\
& x(0) = x_0
\end{align}
Moreover, the solution is given by the PMP system \cite{cannarsa2008semiconcavity}[Theorem 7.4.17]. Hence, the map from $\omega(T)$ to $\omega(0)$ is surjective, where $\omega$ is the solution of the PMP system \eqref{eq:pmpve}. Then the result follows from Proposition \ref{prop:supp}.

\end{proof}

To summarize the developments of this section, we have developed a control-theoretic version of noising and denoising methodology by leveraging the Hamiltonian system associated with PMP. We believe this is a novel alternative to stochastic differential equation based approaches to noising the system, such as developed in \cite{song2020score}, and might of independent interest even in the standard generative modeling setting, that does not involve control-dynamical constraints on the trajectories of the system.

\section{Numerical Examples}
\label{sec:numexp}
In this section we consider several examples for demonstrating the different algorithms presented in the appear. In each of the examples, the controller was learned using a simple multilayer perceptron, with depth varying from $1$ to $3$, from problem to problem.
\subsection{Minimum Energy Measure Interpolants for Linear Systems}
First, we mention the case of achieving measure-to-measure interpolation of linear control systems.

Consider the linear system
\begin{align}
\label{eq:linsys}
& \dot{x}(t) = Ax(t) +B u(t) \nonumber \\
& x(0) = x_0
\end{align}
where $A \in \R^{d \times d}$ and $B \in \R^{d \times m}$. We assume that 
$(A,B):=(A,[b_1,...,b_m])$ satisfies the Kalman rank condition, 
\begin{equation}
{\rm rank}~ [B~AB~....A^{d-1}B] = d
\end{equation}
The minimum energy control between two points $x_0$ and $x_1$ can be constructed in closded form.
We define the controllability grammian
$W  = \int_0^1 e^{At}BB^Te^{A^Tt}dt$. If the system is {\it controllable}, that is, any two points in space can be connected by an admissible trajectory, then the grammian is invertible.  Using this matrix we can construct the interpolating map. 
The optimal minimum energy control connecting $x$ and $y$ is given by.

\[S^u_t(x_0,x_T) =B^T e^{A^t(1-t)} W^{-1}(y - e^{A}z),~~~ t\in I \]
Then the interpolating map is given by

\[S^{\omega}_t(x_0,x_T) = e^{At}x_0 + \int_0^t e^{A(t-\tau)}Bu^{x,y}(\tau)d\tau \]
This setting is identical to the one considered in \cite{mei2025flow}, except we do not alow for noise. 
\subsection{Approximate Output Flow Matching using Feedback Control of Linear Systems}
 The Grammian can be computationally challenging to compute. For this reason, we provide an alternative approach to construct interpolants.
 Once again assume that the system is a linear controllable system according to the last section. Suppose $\mu_f$ is supported on the set of points $E_{eq} = \{ y \in \Rd; \exists u ~ \text{s.t}~ Ay+Bu = 0\}$. These are the set of points at which the system \eqref{eq:linsys} can be at equilibrium. Since the system is controllable, it is also stabilizable. Hence, there exists a matrix $K \in \R^{m \times d} $ such that there exists $\alpha^{y} \in \R^m$, such that for $u(t) = K(\omega - y) + u^y$ the closed-loop system

\begin{align}
\label{eq:clplinsys}
& \dot{\omega}(t) = A\omega(t)+B  K(\omega(t) - y) + B\alpha^y  \nonumber \\
& \omega(0) = x
\end{align}

satisfies
\[\|\omega(t) - y\| \leq M e^{\lambda t } \|x - y\| \]
Therefore, if $t$ is large enough we can ensure $\|\omega(t) - y\| \leq \varepsilon $, facilitating approximate flow matching.

There are several ways to compute this matrix $K$ in the control theory literature. Note that $K$ is independent of $x$ and $y$ and only dependent on $A$ and $B$. An advantage of this approach over the minimum energy approach is that its easier to compute a stabilizing $K$ in comparison to computing the controllability grammian $W$. In the following example, we consider a six-dimensional system with two dimensional equilibrium set. The interpolating trajectories are constructed by using a pole placement based controller that steers the system from the initial to the final states.



\begin{figure}[!ht]
    \centering
    \begin{subfigure}{0.4\linewidth}
        \centering
        \includegraphics[width=\linewidth]{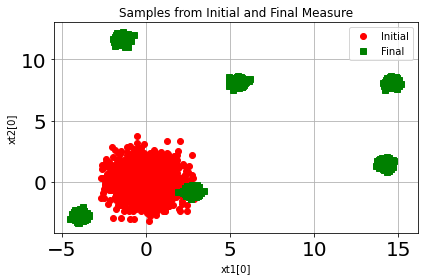}
        \caption{Initial and Target output positions of a six-state linear system.}
        \label{fig:poleplacetarg}
    \end{subfigure}

    \vspace{0.5cm} 

    \begin{subfigure}{0.4\linewidth}
        \centering
        \includegraphics[width=\linewidth]{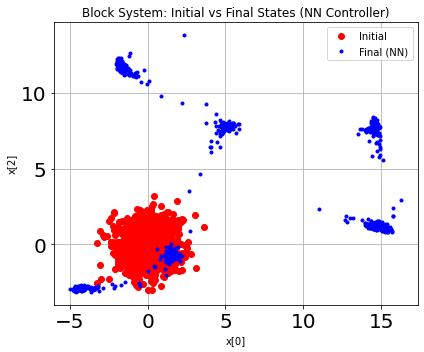}
        \caption{Initial and trained output positions of a six-state linear system.}
        \label{fig:poleplace}
    \end{subfigure}

    \caption{Comparison of final output positions projections of target distribution and trained six-state linear system.}
    \label{fig:LSCC}
\end{figure}

\FloatBarrier
\subsection{Measure interpolation for nonlinear driftless systems}
Controlling nonlinear systems is significantly harder than linear systems. Inspite of this, there do exist a number of methods tailored for constructing interpolating paths between initial and terminal points \cite{jean2014control}. Consider for example the three dimensional system with $2$ control inputs. This system is known to be controllable.

\begin{align}
\dot{x}_1(t) = u_1(t) ~~ \dot{x}_2(t) = u_2(t) ~~ \dot{x}_3(t) = u_1(t)x_2(t) 
\label{eq:drsys1}
\end{align}

An algorithm to steer systems of this form is the following as shown in \cite{jean2014control}. Move $(x_1,x_2)$ to $(y_1,y_2)$ using constant controls
by setting controls 
\[(u_1(t),u_2(t)) = (y_1 \frac{t}{2 \pi}  + \frac{1-t}{2 \pi} x_1,y_2\frac{t}{2 \pi} + \frac{1-t}{2 \pi} x_2)  ~~ t \in (0,2 \pi).\] 
Next, one selects sinusoidal signals of appropriate frequencies to transfer the third coordinate. For instance, one choice of controls is

\begin{align*}(u_1(t),u_2(t)) =(  \sin t,  \frac{y_3 - \omega_3(2 \pi)}{\pi} \cos t ),  \\
t\in (2 \pi, 4 \pi] 
\end{align*}
Then it can be shown that $\omega(0) = x$ and $\omega(4 \pi) = y$. Partially, this is due to the fact that the controls in the second phase integrate to zero over the interval $(2 \pi, 4 \pi] $ and leave the first two coordinates unchanged, while transferring the third coordinate to the required position.

\begin{figure}[!ht]
    \centering
    \begin{subfigure}{0.4\linewidth}
        \centering
        \includegraphics[width=\linewidth]{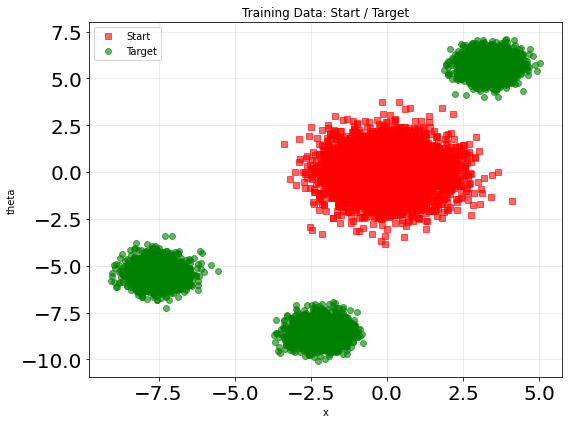}
        \caption{Initial and Target positions. Projection of Target Distribution of a 3D driftless system.}
        \label{fig:noentropy_a}
    \end{subfigure}
 \hfill
    \begin{subfigure}{0.4\linewidth}
        \centering
        \includegraphics[width=\linewidth]{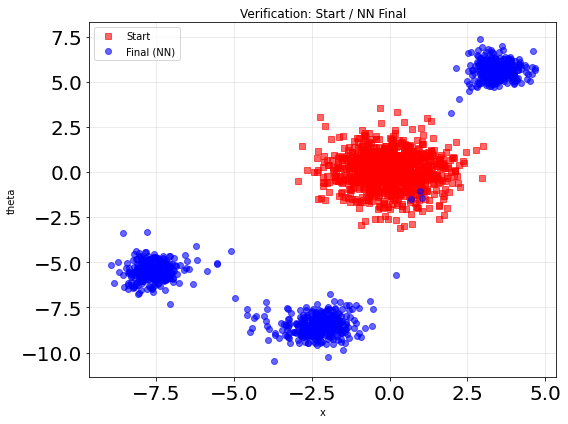}
        \caption{Trained final positions Projection of Target Distribution of a driftless nonlinear system}
        \label{fig:controlaffine}
    \end{subfigure}

    \FloatBarrier

    \caption{Comparison of final positions projections of target distribution and trained driftless system \eqref{eq:drsys1}.}
    \label{fig:noentropy_combined}
\end{figure}

\subsection{Randomized control-based Flow Matching for Stabilization}

In this section, we show a numerical experiment on the Martinet system,
\begin{align}
& \dot{x} = u_1, \\
& \dot{y} = u_2, \\
& \dot{z} = \tfrac{1}{2} y^2 u_1,
\end{align}
where the controls $(u_1,u_2)$ are sampled as continuous Brownian motions initialized at zero.  
The objective is to stabilize the system to the origin by learning a reverse-time policy.  This example violates the assumption on the absence of singular minimizers, as abnormal extremals are known to exist. Perhaps due to this missing regularity, the trajectories do not exactly converge to the origin, but to some one-dimensional curve.
The results of the time-reversal for multiple initial conditions are shown in Figures~\ref{fig:martinet}. 

\begin{figure}[!ht]
    \centering
    \includegraphics[width=0.6\linewidth]{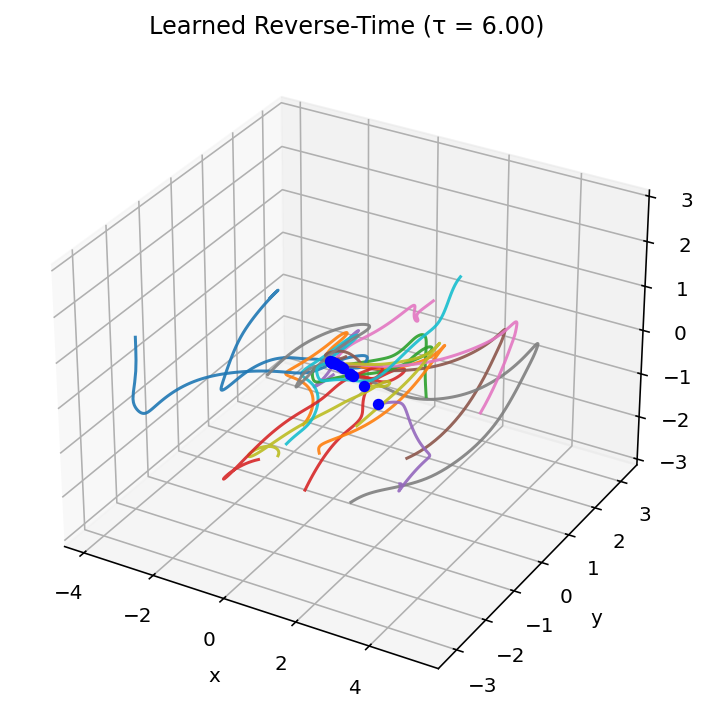}
    \caption{Time-reversed trajectories of the Martinet system visualized in 3D. 
    The learned policy drives a wide range of initial conditions close to the origin.}
    \label{fig:martinet}
\end{figure}

\subsection{PMP-based Flow Matching for Stabilization}
In this section, we show a numerical experiment to stabilize the unicycle model,

\begin{align}
& \dot{x} = v \cos \theta \\ 
& \dot{y} = v \sin \theta \\
& \dot{\theta} = u
\end{align}
The first objective is to stabilize the system to the origin. The results of the time-reversal for multiple initial conditions can be seen in Figures \ref{fig:unicycle}. The second objective is to stabilize the system to the unit sphere. This is shown for a larger number of conditions in Figure \ref{fig:unicycle2}

\begin{figure}[!ht]
    \centering
    \includegraphics[width=0.5\linewidth]{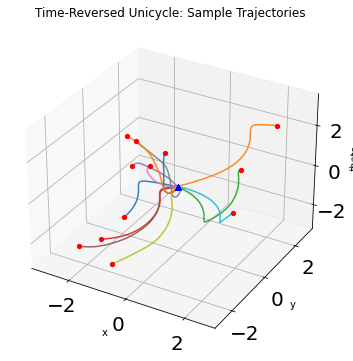}
    \caption{Trajectories of the time-reverse system visualized in 3D, stabilized to the origin.}
    \label{fig:unicycle}
\end{figure}

\begin{figure}[!ht]
    \centering
    \includegraphics[width=0.5\linewidth]{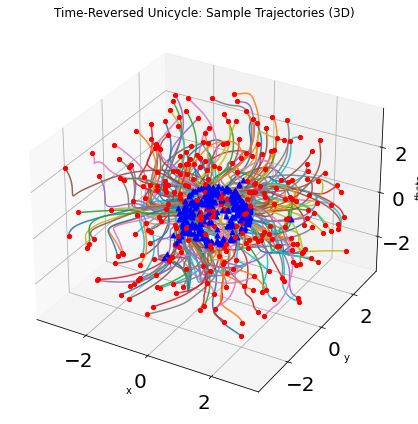}
    \caption{Trajectories of the time-reverse system visualized in 3D, stabilized to the unit sphere.}
    \label{fig:unicycle2}
\end{figure}

\subsection{Flow Matching for Path Planning}
\label{sec:rrtstar}

In this section, we show how flow matching can be used with the principle of denoising to path planning, by using popular path planners that construct open-loop trajectories to synthesize a feedback law. RRT* \cite{karaman2011sampling}  is a well-known algorithm to construct path planning trajectories.

Given an initial configuration $x_0$, we run RRT* {\it without a specified goal} to grow an exploration tree rooted at $x_0$. This produces a collection of $N$ open-loop trajectories $\{x_i(\cdot)\}_{i=1}^N$, each connecting $x_0$ to a distinct target $x_i^f$ sampled from the tree. To construct a stabilizing feedback law, we reverse these trajectories in time so that each runs from $x_i^f$ to $x_0$, reparameterize them by arc length over $t \in [0,1]$, and compute the corresponding open-loop velocities $v_i(t) = \dot{x}_i(t)$.  We then learn a feedback controller $u_\theta(x, t)$, using the flow matching loss. See
\ref{alg:rrt-flow-matching} for the pseudoalgorithm.

Since the learned vector field may point into obstacle regions due to sparse data, we use a heuristic to integrate the closed-loop system $\dot{x} = u_\theta(x, t)$ as a projected dynamical system \cite{nagurney2012projected}, projecting the state onto the boundary of the free space $\mathcal{X}_{\text{free}}$ at each integration step. Additionally, noise is added at the boundary to prevent the trajectories in the reverse direction from being stuck in at the boundary. 
The results of a numerical experiment can be seen in Figure \ref{alg:rrt-flow-matching}.

\begin{figure}[!ht]
    \centering
    \includegraphics[width=0.5\linewidth]{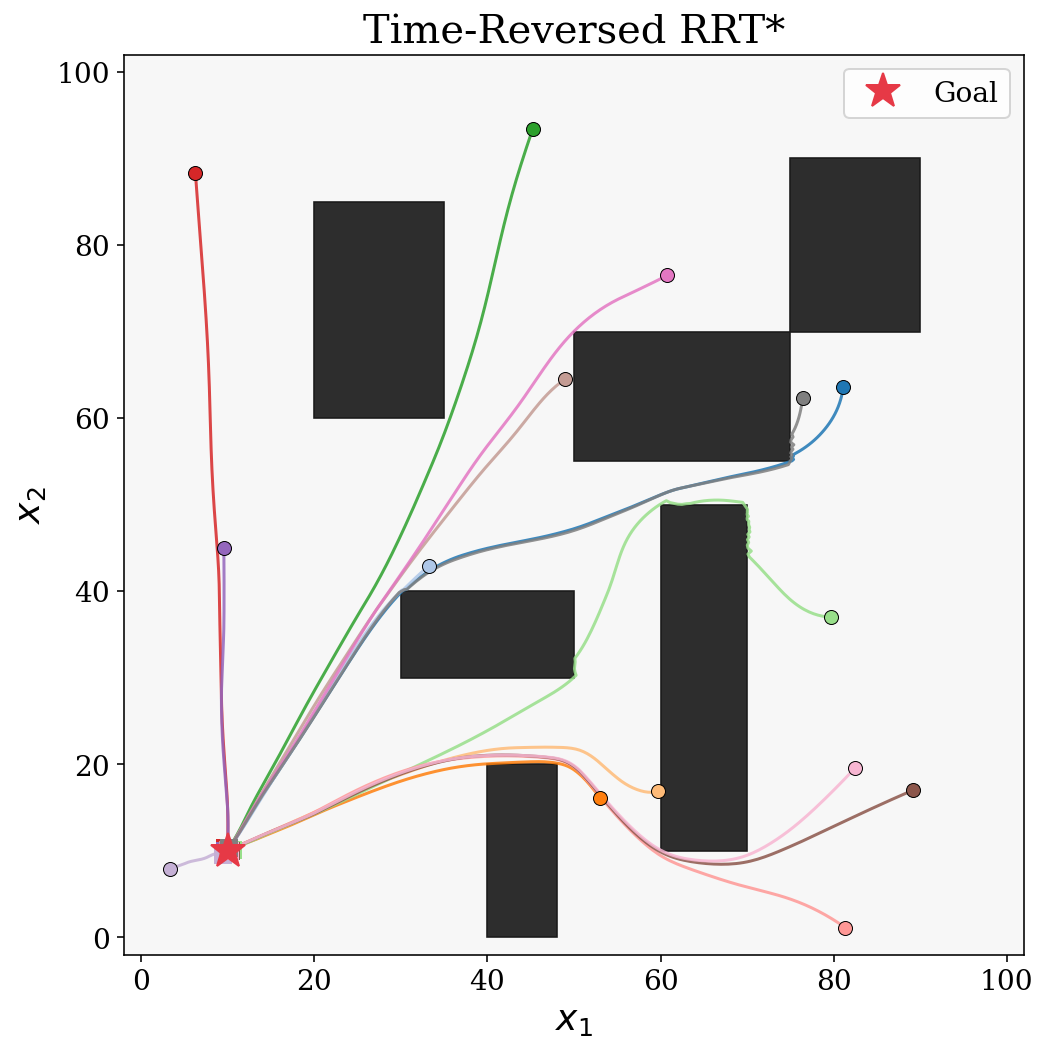}
    \caption{Trajectories of the time-reversed system using RRT* based noising in a domain with rectangular obstacles.}
    \label{fig:rrts1}
\end{figure}

\begin{algorithm}
\caption{Flow Matching for Path Planning via RRT* and Time Reversal}
\label{alg:rrt-flow-matching}
\begin{algorithmic}[1]
\State Fix target configuration $x_0 \in \mathcal{X}_{\mathrm{free}}$. Run RRT* from $x_0$ without a specified goal for $K$ iterations, growing an exploration tree $\mathcal{T}$ rooted at $x_0$ in the free space $\mathcal{X}_{\mathrm{free}} = \mathbb{R}^d \setminus \mathcal{O}$.
\State Sample $N$ nodes $\{x_i^f\}_{i=1}^N$ from $\mathcal{T}$. For each node, extract the trajectory connecting $x_0$ to $x_i^f$, reverse it in time, and reparameterize by arc length over $t \in [0,1]$. 
\State Define the interpolation path $X_t = \tilde{x}_i(t)$ and corresponding velocity $V_t = \dot{\tilde{x}}_i(t)$, where $\tilde{x}_i(0) = x_i^f$ and $\tilde{x}_i(1) = x_0$.
\State Learn control law $u: I \times \mathbb{R}^d \rightarrow \mathbb{R}^d$ by solving the regression problem:
\[\min_{u} \int_I \mathbb{E}\left[\|V_t - u(t, X_t)\|^2\right] dt\]
\State Sample new data via learned flow. Sample $Z_0 \sim \mu_0$ for some $\mu_0 \in \mathcal{P}(\mathcal{X}_{\mathrm{free}})$ and integrate the projected dynamical system:
\[\dot{Z}_t = \Pi_{T_{Z_t}\mathcal{X}_{\mathrm{free}}}(u(t, Z_t))\]
where $\Pi_{T_{Z_t}\mathcal{X}_{\mathrm{free}}}$ denotes projection onto the set of admissible velocities at any point.
\State \textbf{Output:} $Z_1 \approx x_0$.
\end{algorithmic}
\end{algorithm}

\section{Conclusion and Future Outlook} 
In summary, we have extended the framework of flow matching to the setting of controlled dynamical systems, enlarging its scope beyond the usual interpolation between measures. By introducing latent variables into the interpolating maps, we obtain variants such as output flow matching and approximate flow matching, thereby linking classical control techniques—feedback, stabilization, and oscillation-based planning—to measure transport. At the same time, the method can be inverted to address stabilization itself, with the target measure taken as a Dirac mass or a set-supported distribution. Two constructions are proposed in this vein: one derived from the adjoint equations of the Maximum Principle, the other from randomized controls reminiscent of diffusion models. Together these yield four distinct algorithms, each accompanied by existence and regularity results, and tested numerically on illustrative examples. The results indicate both the promise of flow matching as a method for control design. 

We note that there are certain \textbf{open questions} that the work raises. For instance, while we showed existence of solutions of the constructed flows, it is not clear if uniqueness of solutions of flows can be guaranteed in any setting. This is a significant shortcoming, as to conclusively establish the suitability of the method for control physical applications, without uniqueness, no such guarantee can be given. The answer to uniqueness is negative in even simple settings of unconstrained flow matching. Such as if the initial and final measures are sum of two Diracs and one uses two intersecting lines to interpolate measures.

\section{Acknowledgment}

The author thanks Ludovic Rifford and Emmanuel Trélat for their valuable insights and discussions on the properties of the exponential map, which significantly enhanced the quality of this paper.

\FloatBarrier
\appendix

\section{Appendix}
This section collects some supplementary results that are used in the main body of the paper.



\begin{proposition}\textbf{(Time reversal of superposition solutions)}
\label{prop:time-reversal}
Suppose $v: I \times \Rd \rightarrow \Rd$ is a Borel vector field. Let $I \ni t \mapsto \mu_t \in \mathcal{P}(\Rd)$ be such that $\mu_t$ is a weak solution to the continuity equation according to 
\begin{equation}
\partial_t \mu+  \nabla \cdot \left(v(t,x)) \mu \right) =0, ~ \text{in}~ \Rd \times I 
\end{equation}
and 
\[ \int_0^T \int_{\Rd} \frac{|v(t,x)|}{1+|x|}d\mu_t(x) dt < \infty \]
Let $\mathbb{P} \in \mathcal{P}(C(I;\Rd))$ be the probability measure of the superposition principle from Theorem \ref{thm:superposition}. Then $\mu^{\rm rev}_t : = \mu_{t-T}$ is a weak solution to the continuity equation 
\begin{equation}
\partial_t \mu^{\rm rev}+  \nabla \cdot \left(v(t-T,x)) \mu^{\rm rev}  \right) =0, ~ \text{in}~ \Rd \times I 
\end{equation}
Moreover, $\mathbb{P}^{rev}$ is concentrated on the solutions of the ordinary differential equation.
\begin{equation}
	\dot{\omega}(t) = v(t-T,\omega(t)),~~ \gamma(0) = y
\end{equation}
for $\mathbb{P}^{rev}$ almost every $\omega \in C(I;\Rd)$, and $\mathbb{P}(\omega_t \in dx) = d\mu^{\rm rev}_t(x)$, for all $t \in I$.
\end{proposition}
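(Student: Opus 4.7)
The plan is to construct $\mathbb{P}^{\rm rev}$ explicitly as a pushforward of $\mathbb{P}$ under the time-reversal map, and then verify the two claims (the continuity equation and the ODE concentration) by direct computation. Define $R : C(I;\mathbb{R}^d) \to C(I;\mathbb{R}^d)$ by $R(\omega)(t) := \omega(T-t)$. This $R$ is a homeomorphism and an involution, so $\mathbb{P}^{\rm rev} := R_{\#}\mathbb{P}$ is a well-defined probability measure on $C(I;\mathbb{R}^d)$.

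First I would handle the ODE concentration. By the superposition principle (Theorem \ref{thm:superposition}), $\mathbb{P}$-a.e.\ curve $\omega$ is absolutely continuous with $\dot{\omega}(s)=v(s,\omega(s))$ for a.e.\ $s\in I$. For such $\omega$, its reversal $\tilde{\omega}=R(\omega)$ is also absolutely continuous, and the chain rule gives $\dot{\tilde{\omega}}(t)=-\dot{\omega}(T-t)=-v(T-t,\tilde{\omega}(t))$ for a.e.\ $t$. Hence $\mathbb{P}^{\rm rev}$ is concentrated on solutions of the reversed ODE (up to the sign issue in the paper's statement, which follows naturally from $R$ being orientation-reversing in time). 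The marginal identity $(e_t)_\#\mathbb{P}^{\rm rev}=(e_{T-t})_\#\mathbb{P}=\mu_{T-t}=\mu^{\rm rev}_t$ then follows from $e_t\circ R=e_{T-t}$.

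Second, to get the continuity equation for $\mu^{\rm rev}$, I would present two equivalent routes and keep whichever is cleaner. Route (a): apply the forward direction of the superposition--continuity correspondence to $\mathbb{P}^{\rm rev}$ directly, which only requires verifying the integrability bound for the reversed vector field; this follows from the change of variable $s=T-t$:
\[
\int_0^T\!\!\int_{\mathbb{R}^d}\frac{|v(T-t,x)|}{1+|x|}\,d\mu^{\rm rev}_t(x)\,dt=\int_0^T\!\!\int_{\mathbb{R}^d}\frac{|v(s,x)|}{1+|x|}\,d\mu_s(x)\,ds<\infty.
\]
Route (b): test against $\varphi\in C_c^\infty(\mathbb{R}^d\times(0,T))$ in the reversed weak formulation, substitute $\psi(s,x):=\varphi(T-s,x)$ (which is again a valid compactly supported test function), use $\partial_s\psi(s,x)=-\partial_t\varphi(T-s,x)$ and $\nabla_x\psi=\nabla_x\varphi(T-s,\cdot)$, and reduce to the weak form satisfied by $(\mu_t,v)$ after the change of variable $s=T-t$.

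The routine parts are measurability of $R$ and the change of variable; the only mild subtlety is bookkeeping of signs and arguments in the time-reversed vector field (ensuring consistency between the convention $\mu^{\rm rev}_t=\mu_{T-t}$ and the reversed ODE $\dot{\tilde\omega}=-v(T-t,\tilde\omega)$), together with checking that the integrability condition required by Theorem \ref{thm:superposition} transfers under reversal. No deeper obstacle is expected since everything reduces, via the explicit construction $\mathbb{P}^{\rm rev}=R_\#\mathbb{P}$, to the already-established correspondence between the superposition measure and the continuity equation.
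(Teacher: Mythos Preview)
The paper states Proposition~\ref{prop:time-reversal} in the appendix without proof, so there is no argument to compare against. Your approach---defining $\mathbb{P}^{\rm rev}:=R_\#\mathbb{P}$ via the time-reversal map $R(\omega)(t)=\omega(T-t)$ and reading off both the ODE concentration and the reversed continuity equation from the chain rule and a change of variable---is the standard and correct way to prove this; you also rightly flag the sign and index conventions (the statement's $\mu_{t-T}$ and $v(t-T,\cdot)$ are evidently typos for $\mu_{T-t}$ and $-v(T-t,\cdot)$, and your derivation recovers the intended form).
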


\begin{proposition}[Change of Initial Measure and Support of Final Measure]
\label{prop:change-initial}
Let $v: I \times \mathbb{R}^d \rightarrow \mathbb{R}^d$ be a Borel measurable vector field. Let $\mathbb{P} \in \mathcal{P}(\Gamma)$ be a probability measure such that for $\mathbb{P}$-almost every $\omega \in \Gamma$, we have
\begin{equation}
\label{eq:ode-v}
\dot{\omega}(t) = v(t,\omega(t)), \quad t \in I.
\end{equation}

Let $\mu_0 := (e_0)_{\#}\mathbb{P}$ and let $\{\mathbb{P}_x\}_{x \in \mathbb{R}^d}$ be the disintegration of $\mathbb{P}$ with respect to $\mu_0$. Suppose $\tilde{\mu}_0 \in \mathcal{P}(\mathbb{R}^d)$ is absolutely continuous with respect to $\mu_0$. Define the measure $\mathbb{P}^{\mathrm{new}} \in \mathcal{P}(\Gamma)$ by
\[\int_{\Gamma} g(\omega) \, d\mathbb{P}^{\mathrm{new}}(\omega) = \int_{\mathbb{R}^d} \int_{\Gamma} g(\omega) \, d\mathbb{P}_x(\omega) \, d\tilde{\mu}_0(x)\]
for all $g \in C_b(\Gamma)$.

Then $\mathbb{P}^{\mathrm{new}}$-almost every $\omega \in \Gamma$ solves the ODE \eqref{eq:ode-v}. Particularly, $\mu_t : = (e_t)_{\#}\mathbb{P}$ solves the continuity equation \eqref{eq:ctnteq}.

Moreover, ${\rm supp} ~\tilde{\mu}_T ~ \subseteq {\rm supp} ~ \mu_T$.

In the special case, if $\mu_T  = (e_T)_{\#}\mathbb{P} = \delta_{x_T}$ for some $x_T \in \mathbb{R}^d$, then $\tilde{\mu}_T := (e_T)_{\#}\mathbb{P}^{\mathrm{new}} = \delta_{x_T}$. 

If $\mu_0$ and $\tilde{\mu}_0$ are mutually absolutely continuous. Then  ${\rm supp} ~\tilde{\mu}_T  = {\rm supp} ~ \mu_T$.
\end{proposition}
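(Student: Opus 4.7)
The plan is to reduce every claim to a single Radon--Nikodym identity on $\Gamma$. Since $\tilde{\mu}_0 \ll \mu_0$, let $\rho := d\tilde{\mu}_0 / d\mu_0$. My first step would be to establish
\[
d\mathbb{P}^{\mathrm{new}}(\omega) = \rho(e_0(\omega))\, d\mathbb{P}(\omega).
\]
I would verify this by testing against $g \in C_b(\Gamma)$, using the defining formula for $\mathbb{P}^{\mathrm{new}}$, and then invoking the disintegration $d\mathbb{P}(\omega) = d\mathbb{P}_x(\omega)\,d\mu_0(x)$. The key observation is that $\mathbb{P}_x$ is concentrated on $\{\omega : e_0(\omega) = x\}$, so the factor $\rho(x)$ can be rewritten as $\rho(e_0(\omega))$ inside the inner integral, recombining the disintegration back into $\mathbb{P}$. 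This identifies $\mathbb{P}^{\mathrm{new}}$ as absolutely continuous with respect to $\mathbb{P}$ with density $\rho \circ e_0$.

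Given this identity, the ODE claim is immediate: the set $N \subseteq \Gamma$ of curves failing to satisfy \eqref{eq:ode-v} has $\mathbb{P}(N) = 0$ by hypothesis, hence $\mathbb{P}^{\mathrm{new}}(N) = 0$ as well. For the accompanying continuity equation for $\tilde{\mu}_t := (e_t)_{\#}\mathbb{P}^{\mathrm{new}}$, I would pick $\varphi \in C^\infty_c(\mathbb{R}^d \times I)$, integrate the identity $\frac{d}{dt}\varphi(t,\omega(t)) = \partial_t \varphi(t,\omega(t)) + \langle \nabla_x \varphi(t,\omega(t)), v(t,\omega(t))\rangle$ over $t \in I$ and then with respect to $\mathbb{P}^{\mathrm{new}}$, and finally push forward under $e_t$ to obtain the weak form with initial datum $\tilde{\mu}_0 = (e_0)_{\#}\mathbb{P}^{\mathrm{new}}$.

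The support statements all reduce to transporting absolute continuity through the pushforward $e_T$. Since $\mathbb{P}^{\mathrm{new}} \ll \mathbb{P}$, we have $\tilde{\mu}_T = (e_T)_{\#}\mathbb{P}^{\mathrm{new}} \ll (e_T)_{\#}\mathbb{P} = \mu_T$; every $\mu_T$-null Borel set is $\tilde{\mu}_T$-null, so $\mathrm{supp}(\tilde{\mu}_T) \subseteq \mathrm{supp}(\mu_T)$. The Dirac case $\mu_T = \delta_{x_T}$ then forces $\mathrm{supp}(\tilde{\mu}_T) \subseteq \{x_T\}$, and since $\tilde{\mu}_T$ is a probability measure, this gives $\tilde{\mu}_T = \delta_{x_T}$. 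In the mutually absolutely continuous case, $\rho > 0$ $\mu_0$-almost everywhere, so $\mathbb{P}$ and $\mathbb{P}^{\mathrm{new}}$ are mutually absolutely continuous on $\Gamma$ (with reciprocal density $1/(\rho \circ e_0)$), and pushing forward by $e_T$ gives mutual absolute continuity of $\mu_T$ and $\tilde{\mu}_T$, whence their supports agree.

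The main subtlety is the disintegration manipulation underlying the first step: the identification $\mathbb{P}_x$-a.s.\ of $\rho(x)$ with $\rho(e_0(\omega))$ is precisely what converts the formula defining $\mathbb{P}^{\mathrm{new}}$ into a genuine density relation against $\mathbb{P}$. Once this identification is secured, the remaining claims are routine consequences of absolute continuity and pushforwards; a small bookkeeping check that $\int \rho\, d\mu_0 = 1$ (so $\mathbb{P}^{\mathrm{new}}$ is a probability measure) follows immediately from the hypothesis $\tilde{\mu}_0 \in \mathcal{P}(\mathbb{R}^d)$.
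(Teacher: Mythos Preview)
Your proposal is correct and follows essentially the same route as the paper: both arguments introduce the Radon--Nikodym derivative $\rho = d\tilde\mu_0/d\mu_0$, deduce $\mathbb{P}^{\mathrm{new}} \ll \mathbb{P}$, conclude the ODE holds $\mathbb{P}^{\mathrm{new}}$-a.e., and then push absolute continuity forward through $e_T$ to obtain the support statements. The only cosmetic difference is that you go slightly further by exhibiting the explicit density $d\mathbb{P}^{\mathrm{new}}/d\mathbb{P} = \rho \circ e_0$ (using that $\mathbb{P}_x$ is concentrated on $\{e_0 = x\}$), whereas the paper argues $\mathbb{P}^{\mathrm{new}} \ll \mathbb{P}$ directly at the level of null sets via the disintegration; your version is a bit more informative but the substance is the same.
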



\begin{proof}
By the disintegration theorem, we have
\[\int_{\Gamma} g(\omega) \, d\mathbb{P}(\omega) = \int_{\mathbb{R}^d} \int_{\Gamma} g(\omega) \, d\mathbb{P}_x(\omega) \, d\mu_0(x)\]
for all $g \in C_b(\Gamma)$.

Let $h \in L^1(\mu_0)$ be the Radon-Nikodym derivative of $\tilde{\mu}_0$ with respect to $\mu_0$, so that $d\tilde{\mu}_0(x) = h(x) \, d\mu_0(x)$. Since the map
\[x \mapsto \int_{\Gamma} g(\omega) \, d\mathbb{P}_x(\omega)\]
is bounded and measurable for each $g \in C_b(\Gamma)$, the product
\[x \mapsto h(x) \int_{\Gamma} g(\omega) \, d\mathbb{P}_x(\omega)\]
is in $L^1(\mu_0)$. Therefore, the measure $\mathbb{P}^{\mathrm{new}}$ can be equivalently expressed as
\[\int_{\Gamma} g(\omega) \, d\mathbb{P}^{\mathrm{new}}(\omega) = \int_{\mathbb{R}^d} \left(\int_{\Gamma} g(\omega) \, d\mathbb{P}_x(\omega)\right) h(x) \, d\mu_0(x),\]
which is well-defined.
If $A \subset \Gamma$ is a null set for $\mathbb{P}$. Then $\mathbb{P}_x(A)$ is $0$ for $\mu_0$ almost every $x$. This implies that $\mathbb{P}^{\mathrm{new}}(A) = 0 $.
Hence, $\mathbb{P}^{\mathrm{new}}$ gives zero measure to the null sets of $\mathbb{P}$. From this we conclude that $\mathbb{P}^{\mathrm{new}}$-almost every trajectory solves \eqref{eq:ode-v}.

From the previous argument, it is clear that $\mathbb{P}^{\mathrm{new}}$ is absolutely continuous with respect to the measure $\mathbb{P}$. The property of absolute continuity is preserved under pushforward. Hence, $\tilde{\mu}_T$ is absolutely continuous with respect to $\mu_T$. This implies the support inclusion:  ${\rm supp} ~\tilde{\mu}_T ~ \subseteq {\rm supp} ~ \mu_T$.

The same argument extends to the mutually absolutely continuous case, since $h>0$ for  $\mu_0$ almost everywhere on $\Rd$, and hence $\mathbb{P}$ and $\mathbb{P}^{\mathrm{new}}$ are mutually absolutely continuous.
\end{proof}

\begin{proposition}
\label{prop:supp}
Let $X$ and $Y$ be topological spaces, and $F: X \rightarrow Y$ is a continuous function. Let $\mu \in \mathcal{P}(X)$. Then  

\begin{equation}
{\rm  supp} ~ F_{\#} \mu = {\rm cl}~F({\rm  supp} \mu)
\end{equation}
\end{proposition}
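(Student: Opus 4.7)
The plan is to prove the two inclusions $\mathrm{supp}\,F_\#\mu \subseteq \mathrm{cl}\,F(\mathrm{supp}\,\mu)$ and $\mathrm{cl}\,F(\mathrm{supp}\,\mu) \subseteq \mathrm{supp}\,F_\#\mu$ separately, working directly from the topological characterization of the support as the complement of the largest open null set.

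For the first inclusion, I would argue by contrapositive. Take $y \in Y$ with $y \notin \mathrm{cl}\,F(\mathrm{supp}\,\mu)$. By definition of closure, there is an open neighborhood $V \subseteq Y$ of $y$ with $V \cap F(\mathrm{supp}\,\mu) = \emptyset$. Continuity of $F$ makes $F^{-1}(V)$ open in $X$, and the disjointness above forces $F^{-1}(V) \cap \mathrm{supp}\,\mu = \emptyset$, so $F^{-1}(V) \subseteq X \setminus \mathrm{supp}\,\mu$. Since the complement of the support is a $\mu$-null open set, $\mu(F^{-1}(V)) = 0$, hence $F_\#\mu(V) = 0$. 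Thus $y$ admits an open neighborhood of zero $F_\#\mu$-measure, so $y \notin \mathrm{supp}\,F_\#\mu$.

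For the reverse inclusion, it suffices to show $F(\mathrm{supp}\,\mu) \subseteq \mathrm{supp}\,F_\#\mu$, since the right-hand side is closed and taking closure on the left then yields the claim. Let $y = F(x)$ with $x \in \mathrm{supp}\,\mu$, and let $V$ be any open neighborhood of $y$. Then $F^{-1}(V)$ is an open neighborhood of $x$, and since $x$ is in the support of $\mu$, every open neighborhood of $x$ has strictly positive $\mu$-measure. Therefore $F_\#\mu(V) = \mu(F^{-1}(V)) > 0$, which means $y \in \mathrm{supp}\,F_\#\mu$.

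The only subtle point — and the main ``obstacle'' if any — is ensuring that the notion of support is well-defined in the first place, i.e.\ that the union of all open $\mu$-null sets is itself $\mu$-null, so that $\mathrm{supp}\,\mu$ really is the complement of the largest open null set. This requires a mild hypothesis such as second countability or the existence of a countable base for the topology on $X$, which is implicit in the paper's setting (all spaces involved are Polish and in particular $\mathbb{R}^d$). Under this, the two inclusions above combine to give the advertised equality, and no approximation or measure-theoretic heavy machinery beyond the definition of pushforward is required.
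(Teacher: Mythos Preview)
Your proof is correct and follows essentially the same approach as the paper's: both arguments prove the two inclusions directly from the neighborhood characterization of the support, using continuity of $F$ to pull back open sets and the defining property that open sets disjoint from the support have measure zero. The only cosmetic differences are the order in which the inclusions are handled and your explicit reduction of $\mathrm{cl}\,F(\mathrm{supp}\,\mu)\subseteq\mathrm{supp}\,F_\#\mu$ to $F(\mathrm{supp}\,\mu)\subseteq\mathrm{supp}\,F_\#\mu$ via closedness of the support, together with your added remark on second countability; neither changes the substance of the argument.
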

\begin{proof}
Suppose  $y_1 \in {\rm cl}~F({\rm  supp} \mu)$ with neighborhood $N_{y_1}$. Clearly, $N_{y_1} \cap {\rm  supp}$ is non-empty and there exists a point $y_2 \in F({\rm  supp} ~\mu)$. Hence, $N_{y_1}$ is a neighborhood of $y_2$. Let $x_2 \in {\rm  supp} \mu$ be such that $F(x_2) = y_2$. From continuity of $F$, $F^{-1} (N_{y_1})$ is a neighborhood of the point $x_2$. Therefore,
\[
F_{\#}\mu(N_{y_1}) = \mu\!\left(F^{-1}(N_{y_1})\right) > 0
\]
Therefore ${\rm cl}~F({\rm  supp}~ \mu) \subseteq {\rm  supp} ~ F_{\#} \mu$. 

For the reverse inclusion, suppose $y_1 \notin \overline{F({\rm supp}\,\mu)}$. 
Then there exists an open neighborhood $N_{y_1}$ of $y_1$ such that $
N_{y_1} \cap F({\rm supp}\,\mu)$ is empty.
Equivalently, $F^{-1}(N_{y_1}) \cap {\rm supp}\,\mu$ is empty. 
Since $F^{-1}(N_{y_1})$ is open, this implies 
\[
F_{\#}\mu(N_{y_1}) = \mu(F^{-1}(N_{y_1})) = 0.
\]
which means $y_1 \notin {\rm supp}\,F_{\#}\mu$. 
Therefore
\[
{\rm supp}\,F_{\#}\mu \subseteq \overline{F({\rm supp}\,\mu)} .
\]
\end{proof}
\begin{proposition}
    Suppose $f_i :\mathbb{R}^d \rightarrow \mathbb{R}^d$ are smooth, globally bounded vector-fields for $i =1,...,m$, and the cost function $L: \mathbb{R}^d \times \mathbb{R}^m \rightarrow \mathbb{R}$ is such that $L(x, \cdot )$ is strongly convex, uniformly in $x$. Additionally, suppose Assumption \ref{asmp:cost} holds. Then, consider the function,
    \[\alpha(\omega,p) := \arg\min_{\alpha \in \mathbb{R}^m} \left[\langle p, f(\omega,\alpha) \rangle + L(\omega,\alpha)\right]\]
    Then Assumption \ref{asmp:pmp-control} holds true.
    \label{prop:relaass}
\end{proposition}

\begin{proof}

Since $L(\omega,\cdot)$ is uniformly strongly convex and $
a \mapsto \sum_{i=1}^m a_i \langle p,f_i(\omega)\rangle $
is affine, the function
$
a \mapsto \Phi(\omega,p,a): =\langle p,f(\omega,a)\rangle + L(\omega,a)
$
is strongly convex for every $(\omega,p)$. Moreover, because the vector fields $f_i$ are globally bounded and Assumption \ref{asmp:cost} gives quadratic growth in the control variable, $\Phi(\omega,p,\cdot)$ is coercive. Hence $\Phi(\omega,p,\cdot)$ admits a unique minimizer, which we denote by $\alpha(\omega,p)$.

We show local Lipschitz continuity using a standard implicit function theorem argument.
Define
$G(\omega,p,a):=\nabla_a \Phi(\omega,p,a).$
Since $f$ is affine in $a$,
\[
G(\omega,p,a)
=
\bigl(\langle p,f_1(\omega)\rangle,\dots,\langle p,f_m(\omega)\rangle\bigr)
+
\nabla_a L(\omega,a).
\]
The map $G$ is $C^1$ because the vector fields are smooth and $L\in C^2$.

By construction, $\alpha(\omega,p)$ satisfies the first-order optimality condition
\[
G(\omega,p,\alpha(\omega,p))=0.
\]
Moreover,
\[
D_aG(\omega,p,a)=D^2_{aa}L(\omega,a).
\]
Since $L(\omega,\cdot)$ is strongly convex uniformly in $\omega$, there exists $\lambda>0$ such that
\[
\xi^\top D^2_{aa}L(\omega,a)\,\xi \ge \lambda |\xi|^2
\qquad
\text{for all }(\omega,a,\xi)\in\mathbb R^d\times\mathbb R^m\times\mathbb R^m.
\]
Hence $D_aG(\omega,p,a)$ is invertible for every $(\omega,p,a)$.
Applying the implicit function theorem at each point $
(\omega,p,\alpha(\omega,p))$, we conclude that $(\omega,p)\mapsto \alpha(\omega,p)$ is $C^1$ locally.

Next, we derive the linear growth estimate.
By minimality of $\alpha(\omega,p)$,
\[
\Phi(\omega,p,\alpha(\omega,p))\le \Phi(\omega,p,0).
\]
This becomes equivalent to,
\[
L(\omega,\alpha(\omega,p))
+
\sum_{i=1}^m \alpha_i(\omega,p)\,\langle p,f_i(\omega)\rangle
\le
L(\omega,0).
\]
Using again Assumption \ref{asmp:cost} and boundedness of the $f_i$,
\[
\theta |\alpha(\omega,p)|^2
\le
L(\omega,0)+ C_f\sqrt m\,|p|\,|\alpha(\omega,p)|.
\]
with $C_f$ the maximum of the upper bounds of $\sup_{x \in \mathbb{R}^d} |f_i(x)|$.
We will now bound $L(\omega,0)$. Since Assumption \ref{asmp:cost} gives
\[
|\nabla_xL(x,u)|\le C(1+|x|)
\qquad \text{for all }(x,u) \in \mathbb{R}^d \times \mathbb{R}^m,
\]
we may integrate along the segment $s\mapsto s\omega$ to obtain
\[
|L(\omega,0)-L(0,0)|
\le
\int_0^1 |\nabla_xL(s\omega,0)\cdot \omega|\,ds
\le
C|\omega|\int_0^1 (1+s|\omega|)\,ds
\le
C(1+|\omega|^2).
\]
Therefore
\[
L(\omega,0)\le C(1+|\omega|^2).
\]
Substituting this into the previous estimate yields
\[
\theta |\alpha(\omega,p)|^2
\le
C(1+|\omega|^2)+C|p|\,|\alpha(\omega,p)|.
\]
Applying Young's inequality to the last term,
\[
C|p|\,|\alpha(\omega,p)|
\le
\frac{\theta}{2}|\alpha(\omega,p)|^2 + C|p|^2,
\]
and hence
\[
|\alpha(\omega,p)|^2 \le C(1+|\omega|^2+|p|^2).
\]
Taking square roots,
\[
|\alpha(\omega,p)|\le M(1+|\omega|+|p|),
\]
for some constant $M>0$.

This proves Assumption \ref{asmp:pmp-control} holds.
\end{proof}

\appendix

\bibliographystyle{plain}
\bibliography{ref}

\end{document}